\documentclass{article}

\usepackage{microtype}
\usepackage{graphicx}
\usepackage{subcaption}
\usepackage{booktabs} 

\usepackage{hyperref}

\usepackage[accepted]{icml2026}

\usepackage{amsmath}
\usepackage{amssymb}
\usepackage{mathtools}
\usepackage{amsthm}

\usepackage[capitalize,noabbrev]{cleveref}

\usepackage{enumitem}

\theoremstyle{plain}
\newtheorem{theorem}{Theorem}[section]
\newtheorem{proposition}[theorem]{Proposition}
\newtheorem{lemma}[theorem]{Lemma}
\newtheorem{corollary}[theorem]{Corollary}
\theoremstyle{definition}

\newtheorem{assumption}{Assumption}
\theoremstyle{remark}

\numberwithin{equation}{section}
\DeclareMathOperator{\spt}{spt}
\DeclareMathOperator{\gr}{gr}
\DeclareMathOperator{\dist}{dist}
\DeclareMathOperator{\diam}{diam}

\newcommand{\R}{\mathbb{R}}

\newcommand{\OT}{\mathrm{OT}}
\newcommand{\QOT}{\mathrm{QOT}}
\newcommand{\Leb}{\mathrm{Leb}}

\usepackage[textsize=tiny]{todonotes}

\icmltitlerunning{Quadratically Regularized Optimal Transport: Localization Bounds and Affine Case Analysis}

\begin{document}

\twocolumn[
 \icmltitle{Quadratically Regularized Optimal Transport: Localization Bounds and Affine Case Analysis}



 \icmlsetsymbol{equal}{*}

 \begin{icmlauthorlist}
 \icmlauthor{Long Nguyen-Chi}{yyy}
 \icmlauthor{Nam Nguyen}{yyy}
 \icmlauthor{Binh T. Nguyen}{yyy,zzz}
 \end{icmlauthorlist}

 \icmlaffiliation{yyy}{College of Engineering \& Computer Science, VinUniversity}
\icmlaffiliation{zzz}{Center for AI Research, VinUniversity, Hanoi, Vietnam}

\icmlcorrespondingauthor{Long Nguyen-Chi}{lc.stilldomath@gmail.com}
\icmlcorrespondingauthor{Binh T. Nguyen}{binh.nt2@vinuni.edu.vn}

 \icmlkeywords{Machine Learning, ICML}

 \vskip 0.3in
]


\printAffiliationsAndNotice{} 

\begin{abstract}
Quadratic regularization has emerged as a potential alternative to the popular entropic regularization in computational optimal transport, offering the theoretical advantage of producing sparse couplings through its hinge density structure. 
Despite recent progress in one-dimensional settings and general upper bounds, fundamental questions about the localization rate of QOT optimizers around the Monge coupling have remained open. 
In this work, we establish a general lower bound showing that the support of the QOT optimizer cannot concentrate around the Monge graph faster than order $\varepsilon^{\frac{1}{d+2}}$ in the directed Hausdorff distance, matching the conjectured optimal exponent under standard regularity assumptions in \citet{wiesel2025sparsity}. 
We also show that the QOT value gap controls the mean-squared deviation $\mathbb E_{\pi_\varepsilon}\|y-T(x)\|^2$ by the scale of $\varepsilon^{\frac{2}{d+2}}$. 
As a corollary, in the affine Brenier regime, which includes Gaussian-to-Gaussian transport, we derive a sharp pointwise tube bound of order $\varepsilon^{\frac{1}{d+2}}$ by reducing the problem to self-transport and applying recent self-transport sparsity results. 
Finally, we validate our theoretical bound with a synthetic experiment in high-dimensional settings.
\end{abstract}
\section{Introduction}
The optimal transport (OT) problem concerns the task of moving probability measures from a source to a target location \citep{monge1781memoire}.
It has seen multiple applications in machine learning in recent years, including distribution alignment for (unsupervised) domain adaptation \citep{courty2016optimal}, training generative models with Wasserstein/Sinkhorn OT losses \citep{arjovsky2017wasserstein,genevay2018learning}, and using Wasserstein-based losses in structured prediction (including multi-label learning) \citep{frogner2015learning}.
OT also appears in modern generative modeling: flow matching can be instantiated with OT displacement interpolation (Wasserstein geodesics) as the probability path \citep{lipman2023flow}, and related ODE-based transport methods (e.g., rectified-flow variants) are explicitly OT-motivated \citep{liu2022rectified}.
In computational biology, OT-based models are used for single-cell data integration and for learning perturbation responses \citep{schiebinger2019optimal,bunne2023learning,bunne2024optimal}.
In NLP (Natural Language Processing), the Word Mover's Distance casts document comparison as an OT problem over word embeddings \citep{kusner2015word}.

Formally, let $\mu,\nu\in\mathcal P(\R^d)$ be Borel probability measures and let $c:\R^d\times\R^d\to\R$ be a measurable cost.
In Monge's original formulation, one seeks a measurable map $T:\R^d\to\R^d$ pushing $\mu$ to $\nu$ (i.e.\ $T_\#\mu=\nu$) and minimizing
\[
\OT(\mu,\nu)\coloneqq\inf_{T:T_\#\mu=\nu}\int c\bigl(x,T(x)\bigr)\mu(dx),
\]
where $T_\#\mu(B)\coloneqq\mu\bigl(T^{-1}(B)\bigr)$ for all Borel sets $B\subseteq\R^d$.
The Kantorovich optimal transport formulation \citep{kantorovich1942transfer} relaxes the mapping constraint by couplings and yields the convex problem
\[
\OT(\mu,\nu)\coloneqq\inf_{\pi\in\Pi(\mu,\nu)}\int c(x,y)\pi(dx,dy),    
\]
where $\Pi(\mu,\nu)$ is the set of couplings with marginal $\mu,\nu$:
\[
\begin{aligned}
\Pi(\mu,\nu)\coloneqq\{\pi\in\mathcal P(\R^d\times\R^d):\ &\pi(\cdot\times\R^d)=\mu,\\
&\pi(\R^d\times\cdot)=\nu\}.
\end{aligned}
\]
Throughout this paper, we focus on the quadratic cost $c(x,y)=\frac12\|x-y\|^2$, where $\|\cdot\|$ denotes the Euclidean norm. 
In this case, if $\mu\ll\Leb$, and $\mu,\nu$ have finite second moments, Brenier's theorem \citep{brenier1991polar} asserts that the Monge and Kantorovich problems coincide and admit a unique optimizer, which means, there exists a convex function $\varphi:\R^d\to\R$ such that the map $T\coloneqq\nabla\varphi$ satisfies $T_\#\mu=\nu$ and the optimal plan is $\pi_\star=(\mathrm{Id}, T)_\#\mu$. 
We refer the readers to \citet{villani2008optimal} for background and further details.

\paragraph{Regularization and sparsity.}
Regularization is ubiquitous in computational OT and in machine learning applications by improving the scalability of the OT problem \citep{peyre2019computational}.
A classical choice is entropic regularization \citep{cuturi2013sinkhorn}, which leads to Sinkhorn-type algorithms but produces dense couplings: the optimizer $\pi^{\mathrm{ent}}_\varepsilon$ has a strictly positive density w.r.t $P=\mu\otimes\nu$.
Quadratically regularized optimal transport (QOT) replaces the entropic penalty by a squared $L^2(\mu\otimes\nu)$ penalty (equivalently a $\chi^2$ divergence up to an additive constant), and has been advocated as a sparsity-promoting alternative in both discrete and continuous settings \citep{blondel2018smooth,seguy2018large,essid2018quadratically,lorenz2021quadratically}.
A key structural difference from entropic OT is that the QOT optimizer admits a hinge (truncation) density formula (cf.\ \eqref{eqn:optimal_density}), in contrast to the strictly positive Gibbs density of entropic OT.
This hinge structure produces exact zeros whenever the dual slack is negative, and therefore directly exposes sparsity through strict inequalities in the dual slack \citep{lorenz2021quadratically,nutz2025quadratically}.

More precisely, fix $\varepsilon>0$ and set the reference measure $P\coloneqq\mu\otimes\nu$.
For $\pi\ll P$, we study the $\chi^2/L^2$ (quadratic) regularization
\begin{multline}\label{eqn:QOT}
\QOT_\varepsilon(\mu,\nu)\coloneqq \\
\inf_{\pi\in\Pi(\mu,\nu)}\Big\{\int c(x,y)d\pi(x,y)+\frac{\varepsilon}{2}\Big\|\frac{d\pi}{dP}\Big\|_{L^2(P)}^2\Big\},
\end{multline}
which differs from the $\chi^2$-divergence $D_{\chi^2}(\pi\|P)=\int(\frac{d\pi}{dP}-1)^2dP$ only by an additive constant in the objective.
Here, $\varepsilon>0$ is the regularization parameter and $d\pi/dP$ denotes the Radon-Nikodym derivative.
\paragraph{Localization around the Monge coupling.}
We denote by $\pi_\varepsilon$ the unique optimizer of the QOT problem \eqref{eqn:QOT}.
A natural question is: as $\varepsilon\downarrow0$, how fast does the optimizer $\pi_\varepsilon$ concentrate around the Monge graph $\gr T$?
This question quantifies the intrinsic sparsity-accuracy tradeoff of QOT: the hinge density promotes sparse support, but a small $\varepsilon$ is needed for $\pi_\varepsilon$ to approximate the Monge coupling $\pi_\star$.
Controlling the tube thickness around $\gr T$ is also practically relevant when $\pi_\varepsilon$ is used as a sparse correspondence/graph (or via barycentric projection to estimate transport maps), rather than only through its scalar cost value \citep{matsumoto2022beyond,pooladian2021entropic}.
Recent statistical analyses of sparsity-promoting regularized OT further suggest that quantitative control of the geometry of $\spt\pi_\varepsilon$ is a key input for obtaining parametric sample complexity for sparse regularizers, including quadratic regularization \citep{gonzalez2025sample,gonzalez2025sparse}.

We quantify this by the directed Hausdorff distance and vertical bias
$$
r\coloneqq\dist(\spt\pi_\varepsilon;\gr T),b\coloneqq\sup_{(x,y)\in\spt\pi_\varepsilon}\|y-T(x)\|.
$$
When $T$ is $L$-Lipschitz, $r\le b\le\sqrt{1+L^2}r$ (Lemma~\ref{lem:lip-graph}).
Recent work has begun to quantify how $\spt\pi_\varepsilon$ concentrates around the Monge coupling as $\varepsilon\downarrow0$, obtaining sharp rates in one dimension and the first general-dimensional upper bounds under geometric and spread assumptions~\citep{gonzalez2024sparsity, wiesel2025sparsity,gvalani2026sparsity}.
However, outside the self-transport case $\mu=\nu$, the available general-dimensional upper bounds do not match the conjectured $\varepsilon^{1/(d+2)}$ scale under standard assumptions.
Moreover, prior to this work, a general lower bound ruling out faster-than-$\varepsilon^{1/(d+2)}$ concentration was not available.
Our contribution can be summarized as follows.
\paragraph{Contributions.}
We provide the following results for the QOT problem with the quadratic cost $c(x,y)\coloneqq\frac12\|x-y\|^2$.
\begin{enumerate}[leftmargin=15pt]
\item \textbf{A general lower bound with the conjectured exponent.}
Under standard regularity assumptions on $\mu,\nu$ and the Lipschitzness of the Monge map $T$, we prove that $\spt\pi_\varepsilon$ cannot concentrate around $\gr T$ faster than order $\varepsilon^{1/(d+2)}$ in directed Hausdorff distance (Theorem~\ref{thm:general-lb}).
\item \textbf{Mean-squared and high-probability bias from the QOT value gap.}
We show that the QOT value gap $\Delta_\varepsilon$ upper bounds the mean-squared deviation from the Monge map under $\pi_\varepsilon$ (Lemma~\ref{lem:L2-bias-gap}).
Combining this with sharp value-gap rates from \citet{eckstein2024convergence} yields the $\varepsilon^{2/(d+2)}$ mean-square scale and a simple tail bound for $\|Y-T(X)\|$ when $(X,Y)\sim\pi_\varepsilon$ (Theorem~\ref{thm:mse-highprob-bias}).
\item \textbf{Affine regime: sharp pointwise upper bound.}
When $T$ is affine ($T(x)=Ax+b$ with $A\succ 0$, including Gaussian-to-Gaussian transport), we obtain the sharp pointwise bias upper bound $\sup_{(x,y)\in\spt\pi_\varepsilon}\|y-T(x)\|\lesssim\sqrt{\lambda_{\max}(A)}\varepsilon^{1/(d+2)}$ (Theorem~\ref{thm:affine-sharp-pointwise}).
\end{enumerate}
\paragraph{Organization.}
Section~\ref{sec:preliminaries} introduces the background for understanding the whole paper, Section~\ref{sec:results} gives the three results above, Section~\ref{sec:related} shows the important results in QOT, Section~\ref{sec:experiments} provides synthetic experiments validating the $\varepsilon^{1/(d+2)}$ scaling in the affine Brenier regime, and Section~\ref{sec:conclusion} summarizes the whole paper and provides some suggestions for the future works.

\section{Preliminaries}\label{sec:preliminaries}

\subsection{Notations}
For a map $F:\R^d\to\R^d$, we denote its graph by
$$
\gr F\coloneqq\{(x,F(x)):x\in\R^d\}.
$$
For a point $m$ in a metric space $(M,d)$ and a set $B\subseteq M$, define
$$
\dist(m,B)\coloneqq \inf_{b\in B}d(m,b).
$$
For sets $A,B\subseteq M$, define the directed Hausdorff distance
$$
\dist(A;B)\coloneqq\sup_{a\in A}\dist(a,B)=\sup_{a\in A}\inf_{b\in B}d(a,b).
$$
In our setting, we always take $M=\R^d\times\R^d$ with the Euclidean distance.
In particular, for a coupling $\pi$ we have:
$$
\dist(\spt\pi;\gr T)=\sup_{(x,y)\in\spt\pi}\inf_{x'}\big\|(x,y)-(x',T(x'))\big\|.
$$
Thus $\dist(\spt\pi;\gr T)$ measures how far the support of $\pi$ can deviate from the graph of the Monge map.
For a function $f:\R^d\to\R\cup\{+\infty\}$, we denote by
$$
f^*(y)\coloneqq\sup_{x\in\R^d}\bigl[\langle x,y\rangle-f(x)\bigr]
$$
its convex conjugate.

Let $B(y,r)$ denote the (closed) Euclidean ball in $\R^d$ centered at $y$ with radius $r>0$. Let $\omega_d=\pi^{d/2}/\Gamma(\frac d2+1)$ be the unit-ball $B(0,1)$ volume.

\subsection{Dual formulation of quadratically regularized optimal transport}\label{sec:QOT}
\citet{seguy2018large} derived the Lagrangian dual of \eqref{eqn:QOT}, and \citet{lorenz2021quadratically} proved strong duality and the existence of dual optimizers.
The dual problem associated with \eqref{eqn:QOT} reads
\begin{multline}\label{eqn:dual}
\sup_{f\in L^1(\mu),g\in L^1(\nu)}\Big\{\int f(x)d\mu(x)+\int g(y)d\nu(y)\\
-\frac{1}{2\varepsilon}\int\bigl[f(x)+g(y)-c(x,y)\bigr]_+^2dP(x,y)\Big\}, 
\end{multline}
where $[\cdot]_+\coloneqq\max\{\cdot,0\}$, and $(f_\varepsilon,g_\varepsilon)$ denotes any maximizer of \eqref{eqn:dual}; we call them optimal dual potentials.
It is well known from \citet{lorenz2021quadratically} that for some continuous $f_\varepsilon,g_\varepsilon$, the unique optimizer $\pi_\varepsilon$ of \eqref{eqn:QOT} is absolutely continuous w.r.t $P$, with density
\begin{equation}\label{eqn:optimal_density}
\frac{d\pi_\varepsilon}{dP}(x,y)=\frac{1}{\varepsilon}\bigl[f_\varepsilon(x)+g_\varepsilon(y)-c(x,y)\bigr]_+,
\end{equation}
The support of $\pi_\varepsilon$ is:
\begin{equation}\label{eqn:support_pi}
\spt\pi_\varepsilon=\overline{\bigl\{(x,y)\in\spt P:f_\varepsilon(x)+g_\varepsilon(y)>c(x,y)\bigr\}}.
\end{equation}
Up to constant shifts, they are uniquely defined by
\begin{align}
\int\bigl[f_\varepsilon(x)+g_\varepsilon(y)-c(x,y)\bigr]_+\mu(dx)&=\varepsilon\ \forall y\in\spt\nu,\label{eqn:dual1}\\
\int\bigl[f_\varepsilon(x)+g_\varepsilon(y)-c(x,y)\bigr]_+\nu(dy)&=\varepsilon\ \forall x\in\spt\mu.\label{eqn:dual2}
\end{align}
A key structural feature is that the optimal density is a truncated affine function of the dual slack, cf.\ \eqref{eqn:optimal_density}; this directly exposes sparsity through the strict inequality in \eqref{eqn:support_pi}.

\section{Results}\label{sec:results}
Our first main result gives a lower bound on $r$ (hence also on $b$) with exponent $1/(d+2)$, which coincides with the QOT bias conjecture rate.
Our second result shows that the value gap $\Delta_\varepsilon\coloneqq\QOT_\varepsilon(\mu,\nu)-\OT(\mu,\nu)$ controls the mean-squared bias $\int\|y-T(x)\|^2d\pi_\varepsilon$ and yields an $\varepsilon^{2/(d+2)}$ scale.
Finally, in the affine Brenier regime $T(x)=Ax+a$, we reduce QOT$(\mu,\nu)$ exactly to a self-transport QOT problem and obtain the sharp pointwise upper bound $b\lesssim\sqrt{\lambda_{\max}(A)}\varepsilon^{1/(d+2)}$.

We begin by specifying the standing assumptions on $(\mu,\nu)$ and the Monge map $T$. 
Note that we work with compactly supported probability measures $\mu$ and $\nu$ on $\R^d$.
\begin{assumption}\label{assu:mu}
There exists a bounded connected open Lipschitz domain $\Omega\subset\R^d$ with $\Omega\subset B(0,1)$ such that $\mu(dx)=\rho_\mu(x)\mathbf 1_{\Omega}(x)dx$, and there are constants $0<\underline\lambda_\mu\le\overline\lambda_\mu<\infty$ such that
$$
\underline\lambda_\mu\le\rho_\mu(x)\le\overline\lambda_\mu\text{ for Lebesgue-a.e. }x\in\Omega.
$$
\end{assumption}
This leads to $\spt\mu=\overline{\Omega}$.
\begin{assumption}\label{assu:nu}
The measure $\nu\in\mathcal{P}(\R^d)$ satisfies $\nu\ll\Leb$, $\spt\nu\subseteq B(0,1)$ and there exist constants $0<\overline\lambda_\nu<\infty$ such that $d\nu/dy\le\overline\lambda_\nu$ on $\spt\nu$.
\end{assumption}

The normalization $\spt\mu,\spt\nu\subseteq B(0,1)$ is convenient and can always be achieved by rescaling. 
These assumptions ensure, in particular, that $\mu$ and $\nu$ are compactly supported, since supports of Borel probability measures are closed subsets of $\R^d$.

Besides Assumptions~\ref{assu:mu}-\ref{assu:nu}, we use the following standard assumption:
\begin{assumption}\label{assu:lipschitz}
The Monge map $T=\nabla\varphi$ is $L$-Lipschitz for some $L>0$.
\end{assumption}
\paragraph{Remark on assumptions.}
Assumption~\ref{assu:mu}-\ref{assu:nu} (bounded Lipschitz domain and density bounded above/below) are standard ``no-holes, no-vanishing-density'' conditions.
This is the typical regime in which the $\varepsilon$-spread scales as $\delta(\varepsilon)\asymp\varepsilon^{1/(d+1)}$, matching the canonical benchmark in previous work on QOT sparsity.
Here, we define the $\varepsilon$-spread of $\mu$ by
$$
\begin{aligned}
\delta(\varepsilon)&\coloneqq\inf\bigl\{r>0: r\rho(r)>\varepsilon\bigr\},\\
\rho(r)&\coloneqq\inf_{x\in\spt\mu}\mu\bigl(B(x,r)\bigr).
\end{aligned}
$$
where $B(x,r)$ denotes the closed Euclidean ball of radius $r$ centered at $x$.
Intuitively, $\delta(\varepsilon)$ captures how uniformly the probability mass of $\mu$ is spread over its support: if every ball of radius $r$ inside the support carries at least some fixed amount of mass, then $\rho(r)$ is not too small, and therefore $\delta(\varepsilon)$ is also small.

This type of assumption is standard in statistical guarantees for regularized OT maps; see, e.g., \citet{pooladian2021entropic}. 
The ``density bounded from 0 and infinity" assumption is also used by \citet{gonzalez2024sparsity} to show the sharp rate of the distance to the optimizer in 1D.
Assumption~\ref{assu:lipschitz} mirrors the regularity used in the bias upper bound of \citet{wiesel2025sparsity} and in statistical analyses of OT map \citep{hutter2021minimax, muzellec2024near, balakrishnan2025stability}.

We now derive a nontrivial lower bound that does not suffer from a global subtraction term and matches the optimal exponent in~$\varepsilon$.
\subsection{A general lower bound to the Monge map}\label{sec:general-lb}
We first state the following elementary bound on distance to a Lipschitz graph.
\begin{lemma}[Distance to a Lipschitz graph]\label{lem:lip-graph}
Let $T:\R^d\to\R^d$ be $L$-Lipschitz. Then for any $(x,y)\in\R^d\times\R^d$,
$$
\dist\big((x,y);\gr T\big)\ge\frac{\|y-T(x)\|}{(1+L^2)^{1/2}}.
$$
\end{lemma}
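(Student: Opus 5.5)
The plan is to bound the distance from below by a pointwise estimate that holds for every point of the graph, and then pass to the infimum. Fix $(x,y)\in\R^d\times\R^d$ and an arbitrary $x'\in\R^d$. We will show that
$$
\big\|(x,y)-(x',T(x'))\big\|^2=\|x-x'\|^2+\|y-T(x')\|^2\ \ge\ \frac{\|y-T(x)\|^2}{1+L^2}.
$$
Taking the infimum over $x'$ then gives the claim. This is an infimum rather than an attained minimum, but since the lower bound is uniform in $x'$ this causes no difficulty.

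To establish the pointwise inequality, set $s\coloneqq\|x-x'\|$. The triangle inequality together with the $L$-Lipschitz property of $T$ gives
$$
\|y-T(x)\|\le\|y-T(x')\|+\|T(x')-T(x)\|\le\|y-T(x')\|+Ls.
$$
Writing $t\coloneqq\|y-T(x')\|$, we therefore have $\|y-T(x)\|\le t+Ls$.

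The key step is the Cauchy--Schwarz inequality applied to the vectors $(1,L)$ and $(t,s)$ in $\R^2$:
$$
t+Ls\le(1+L^2)^{1/2}(t^2+s^2)^{1/2}.
$$
Combining this with the previous bound yields
$$
\|y-T(x)\|\le(1+L^2)^{1/2}\big\|(x,y)-(x',T(x'))\big\|,
$$
which is exactly the pointwise inequality. Taking the infimum over $x'$ finishes the proof.

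There is no real obstacle in this argument. The only point requiring care is the choice of constant: Cauchy--Schwarz must be applied in the weighted form above to obtain the sharp factor $(1+L^2)^{1/2}$, since the cruder bound $t+Ls\le\max(1,L)(t+s)$ would lose the stated constant.
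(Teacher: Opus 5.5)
Your proof is correct and follows essentially the same route as the paper: both bound $\|y-T(x)\|\le\|y-T(x')\|+L\|x-x'\|$ for an arbitrary graph point $(x',T(x'))$ and then take the infimum over $x'$. The only difference is the final step. The paper splits into cases according to whether $\|y-T(x)\|\le L\|x-x'\|$ and minimizes the resulting quadratic in $\|x-x'\|$ explicitly. Your Cauchy--Schwarz step with the vector $(1,L)$ gives the same sharp factor $(1+L^2)^{1/2}$ in one line, with no case analysis.
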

\paragraph{Remark on distance vs bias.}
If $T$ is $L$-Lipschitz, then for all $(x,y)$,
$$
\begin{aligned}
\dist\bigl((x,y);\gr T\bigr)&\le\|y-T(x)\|\\
&\le\sqrt{1+L^2}\dist\bigl((x,y);\gr T\bigr).
\end{aligned}
$$
Hence the directed distance $\dist(\spt\pi_\varepsilon;\gr T)$ and the vertical bias $\sup_{(x,y)\in\spt\pi_\varepsilon}\|y-T(x)\|$ are equivalent up to the factor $\sqrt{1+L^2}$.
\begin{lemma}[Fiberwise $L^2$ lower bound by marginal constraints]\label{lem:fiber-l2}
Let $\pi_\varepsilon\ll P$ with density $h=\frac{d\pi_\varepsilon}{dP}$ and let $r\coloneqq\dist(\spt\pi_\varepsilon;\gr T)$. 
For every $y$, define $X_y\coloneqq\{x\in\spt\mu:h(x,y)>0\}$. 
Then:
\begin{enumerate}
\item[(i)] $\int h(x,y)^2\mu(dx)\ge\frac{1}{\mu(X_y)}$
\item[(ii)] If $T$ is $L$-Lipschitz, then 
$$
X_y\subseteq T^{-1}\big(B(y,(1+L^2)^{1/2}r)\big)\text{ up to a $\mu$-null set},
$$
hence $\mu(X_y)\le\nu\big(B(y,(1+L^2)^{1/2}r)\big)$ and, under Assumption~\ref{assu:nu},
$$
\begin{aligned}
\mu(X_y)&\le\overline\lambda_\nu\Leb\big(B(0,(1+L^2)^{1/2}r)\big)\\
&=\overline\lambda_\nu(1+L^2)^{d/2}\omega_dr^d.
\end{aligned}
$$
\end{enumerate} 
Consequently,
\begin{equation}\label{eqn:fiber-global-L2}
\begin{aligned}
\big\|h\big\|_{L^2(P)}^2=\int\int h^2d\mu d\nu&\ge\int\frac{1}{\mu(X_y)}\nu(dy)\\
&\ge\frac{1}{\overline\lambda_\nu(1+L^2)^{d/2}\omega_d}r^{-d}.
\end{aligned}
\end{equation}
\end{lemma}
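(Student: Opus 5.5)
The plan is to prove the three parts in order: (i) by Cauchy–Schwarz on each fiber using the marginal constraint, (ii) by a geometric inclusion coming from Lemma~\ref{lem:lip-graph}, and the consequence by integrating (i) against $\nu$ and inserting the bound from (ii).

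\textbf{Part (i).} Since $\pi_\varepsilon\in\Pi(\mu,\nu)$ and $\pi_\varepsilon=h\,P$ with $P=\mu\otimes\nu$, the second marginal constraint gives $\int h(x,y)\,\mu(dx)=1$ for $\nu$-a.e.\ $y$. In the setting of \eqref{eqn:optimal_density} this holds for every $y\in\spt\nu$ by \eqref{eqn:dual1}; in general I will state (i) for $\nu$-a.e.\ $y$, which is all the integrated consequence needs. The integrand vanishes off $X_y$, so Cauchy--Schwarz gives
\[
1=\Bigl(\int_{X_y} h(x,y)\,\mu(dx)\Bigr)^2\le \mu(X_y)\int h(x,y)^2\,\mu(dx),
\]
which is (i). This also shows $\mu(X_y)>0$, so the division is legitimate.

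\textbf{Part (ii).} Fix $y$ and take $x\in X_y$. Then $(x,y)$ lies in the set $\{h>0\}$. Using \eqref{eqn:support_pi}, and intersecting with $\spt P$ (which only removes a $P$-null set), this set is contained in $\spt\pi_\varepsilon$, so $\dist((x,y);\gr T)\le r$. Lemma~\ref{lem:lip-graph} then gives $\|y-T(x)\|\le(1+L^2)^{1/2}r$, that is, $T(x)\in B(y,(1+L^2)^{1/2}r)$, and hence $x\in T^{-1}(B(y,(1+L^2)^{1/2}r))$.

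The main technical obstacle is the measure-theoretic step of passing from ``$h>0$'' to ``lies in $\spt\pi_\varepsilon$'' for each fiber; this is why (ii) is stated up to a $\mu$-null set. For a general density $h$, $\{h>0\}\setminus\spt\pi_\varepsilon$ is $P$-null, so by Fubini its $y$-sections are $\mu$-null for $\nu$-a.e.\ $y$. That suffices once we integrate in $y$. When $h$ is the continuous hinge density, the inclusion $\{h>0\}\cap\spt P\subseteq\spt\pi_\varepsilon$ holds pointwise.

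Given the inclusion, $\mu(X_y)\le\mu(T^{-1}(B))=(T_\#\mu)(B)=\nu(B)$ with $B=B(y,(1+L^2)^{1/2}r)$. Under Assumption~\ref{assu:nu}, $\nu(B)\le\overline\lambda_\nu\Leb(B)=\overline\lambda_\nu(1+L^2)^{d/2}\omega_d r^d$.

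\textbf{Consequence.} By Tonelli,
\[
\|h\|_{L^2(P)}^2=\int\!\Bigl(\int h^2\,d\mu\Bigr)\nu(dy).
\]
Applying (i) pointwise for $\nu$-a.e.\ $y$ gives the first inequality in \eqref{eqn:fiber-global-L2}. The uniform bound from (ii), together with $\nu(\R^d)=1$, gives the second. If $r=0$, part (ii) would force $\mu(X_y)=0$, contradicting $\mu(X_y)>0$ from (i); so $r>0$ and the right-hand side $r^{-d}$ is finite.
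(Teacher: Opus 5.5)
Your proof is correct and follows the paper's argument. Both proofs apply Cauchy--Schwarz on each fiber using the $y$-marginal constraint, obtain the inclusion $X_y\subseteq T^{-1}\bigl(B(y,(1+L^2)^{1/2}r)\bigr)$ from the continuous hinge density on $\spt P$ together with Lemma~\ref{lem:lip-graph}, and then integrate against $\nu$. Your additional care only tightens points the paper glosses over: you restrict to $\nu$-a.e.\ $y$ (or $y\in\spt\nu$) instead of ``every $y$'', you give the Fubini argument for a general density, and you rule out $r=0$.
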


Recall the value gap
$$
\Delta_\varepsilon=\QOT_\varepsilon(\mu,\nu)-\OT(\mu,\nu)\ge0.
$$
For the $\chi^2$ (i.e., $L^2$) regularization, because $\spt\mu,\spt\nu\subseteq B(0,1)$, we have $\mu,\nu\in\mathcal P_{2+\delta}(\R^d)$ for every $\delta>0$. So, we can use \citealp[Corollary~3.1, Proposition~4.2]{eckstein2024convergence}, which gave convergence rates in the quadratic-cost case:
\begin{equation}\label{eqn:value-gap-rate}
C_\mathrm{val}^{-1}\varepsilon^{\frac{2}{2+d}}\le\Delta_\varepsilon\le C_\mathrm{val}\varepsilon^{\frac{2}{2+d}}
\end{equation}
for $\varepsilon\in (0,1]$.

Lemma~\ref{lem:fiber-l2} gives the lower bound for $\|h\|_{L^2(P)}^2$.
Since the quadratic penalty contributes $(\varepsilon/2)\|h\|_{L^2(P)}^2$ to $\QOT_\varepsilon$, we can get the lower bound of $r$, proved in Theorem~\ref{thm:general-lb}.
\begin{theorem}[General lower bound with optimal exponent]\label{thm:general-lb}
Let $\pi_\varepsilon$ be the QOT optimizer and let Assumptions~\ref{assu:mu},\ref{assu:nu}, and \ref{assu:lipschitz} hold. 
Set
$$
C_0\coloneqq\overline\lambda_\nu(1+L^2)^{d/2}\omega_d 
$$
and
$$
C_{\mathrm{lin}}\coloneqq\int cdP-\OT(\mu,\nu)+\frac12.
$$
Then there exist constants
$$
c_{\mathrm{sm}}=\Big(\frac1{2C_0C_\mathrm{val}}\Big)^{1/d},c_{\mathrm{lg}}=\Big(\frac1{2C_0C_\mathrm{lin}}\Big)^{1/d}>0
$$
(depending only on $d,L,\overline\lambda_\nu$ and $C_{\mathrm{val}}$) such that for all $\varepsilon>0$,
\begin{equation}\label{eqn:gen-lb-all-eps}
\dist\bigl(\spt\pi_\varepsilon;\gr T\bigr)\ge
\begin{cases}
c_{\mathrm{sm}}\varepsilon^{\frac{1}{2+d}},&\varepsilon\in(0,1],\\
c_{\mathrm{lg}},&\varepsilon\ge1.
\end{cases}
\end{equation}
In particular, $\dist(\spt\pi_\varepsilon;\gr T)\ge\min\{c_{\mathrm{sm}}\varepsilon^{\frac{1}{2+d}},c_{\mathrm{lg}}\}$ for all $\varepsilon>0$.
\end{theorem}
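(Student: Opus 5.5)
The plan is to sandwich the penalty term $\frac{\varepsilon}{2}\|h\|_{L^2(P)}^2$, where $h=d\pi_\varepsilon/dP$, between the fiberwise lower bound of Lemma~\ref{lem:fiber-l2} and an upper bound from the optimality of $\pi_\varepsilon$. Write $r\coloneqq\dist(\spt\pi_\varepsilon;\gr T)$. If $r=0$ or $r=\infty$, then \eqref{eqn:fiber-global-L2} forces $\|h\|_{L^2(P)}=\infty$ or the claim is trivial, so we may assume $0<r<\infty$. By \eqref{eqn:fiber-global-L2},
\[
\frac{\varepsilon}{2}\|h\|_{L^2(P)}^2\ge \frac{\varepsilon}{2C_0}\,r^{-d}.
\]
The rest of the argument bounds the left-hand side from above in two regimes.

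For $\varepsilon\in(0,1]$ we use the value gap. Since $\pi_\varepsilon\in\Pi(\mu,\nu)$, we have $\int c\,d\pi_\varepsilon\ge\OT(\mu,\nu)$. Hence
\[
\frac{\varepsilon}{2}\|h\|^2_{L^2(P)}=\QOT_\varepsilon(\mu,\nu)-\int c\,d\pi_\varepsilon\le \QOT_\varepsilon(\mu,\nu)-\OT(\mu,\nu)=\Delta_\varepsilon\le C_{\mathrm{val}}\varepsilon^{2/(d+2)}
\]
by \eqref{eqn:value-gap-rate}. Combining this with the lower bound gives $r^{-d}\le 2C_0C_{\mathrm{val}}\varepsilon^{2/(d+2)-1}=2C_0C_{\mathrm{val}}\varepsilon^{-d/(d+2)}$. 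Taking $d$-th roots yields $r\ge (2C_0C_{\mathrm{val}})^{-1/d}\varepsilon^{1/(d+2)}=c_{\mathrm{sm}}\varepsilon^{1/(d+2)}$.

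For $\varepsilon\ge1$ we cannot use \eqref{eqn:value-gap-rate}. Instead we compare with the competitor $P=\mu\otimes\nu\in\Pi(\mu,\nu)$, whose density is $1$. This gives
\[
\QOT_\varepsilon(\mu,\nu)\le\int c\,dP+\frac{\varepsilon}{2}.
\]
Subtracting $\int c\,d\pi_\varepsilon\ge\OT(\mu,\nu)$ as before gives
\[
\frac{\varepsilon}{2}\|h\|^2\le\int c\,dP-\OT(\mu,\nu)+\frac{\varepsilon}{2}\le \varepsilon\Bigl(\int c\,dP-\OT(\mu,\nu)+\frac12\Bigr)=\varepsilon C_{\mathrm{lin}},
\]
where the last step uses $\varepsilon\ge1$ and the fact that $\int c\,dP-\OT\ge0$. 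Therefore $\frac{\varepsilon}{2C_0}r^{-d}\le\varepsilon C_{\mathrm{lin}}$, so $r\ge(2C_0C_{\mathrm{lin}})^{-1/d}=c_{\mathrm{lg}}$. The combined statement $r\ge\min\{c_{\mathrm{sm}}\varepsilon^{1/(d+2)},c_{\mathrm{lg}}\}$ follows immediately from the two cases. Note that $C_{\mathrm{lin}}$ is finite and at least $1/2$ because the supports lie in $B(0,1)$.

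The computations above are routine. The real content is Lemma~\ref{lem:fiber-l2}, which we may assume, together with the sharp value-gap upper bound of \citet{eckstein2024convergence}. The only point needing care is the degenerate case $r=0$. Here Lemma~\ref{lem:fiber-l2}(ii) gives $\mu(X_y)=0$, which contradicts the marginal constraint $\int h(x,y)\,\mu(dx)=1$ for $\nu$-a.e.\ $y$. So this case cannot occur, and the division by $r^d$ is legitimate.
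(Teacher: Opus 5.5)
Your proposal is correct and follows the paper's proof essentially step for step. You combine the fiberwise bound $\|h\|_{L^2(P)}^2\ge C_0^{-1}r^{-d}$ with $\frac{\varepsilon}{2}\|h\|_{L^2(P)}^2\le\Delta_\varepsilon$, then bound $\Delta_\varepsilon$ by the value-gap rate for $\varepsilon\in(0,1]$ and by the product-coupling competitor $P$ (giving $C_{\mathrm{lin}}\varepsilon$) for $\varepsilon\ge1$. Your explicit exclusion of the degenerate case $r=0$ (via $\mu(X_y)=0$ contradicting the marginal constraint) is a small rigor point that the paper leaves implicit.
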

\paragraph{Proof sketch.}
When $\varepsilon\in(0,1]$, we combine this with the sharp upper bound on the value gap to obtain the first result.
When $\varepsilon\ge1$, we use the competitor $\pi=P=\mu\otimes\nu$ in \eqref{eqn:QOT}, giving $\QOT_\varepsilon(\mu,\nu)\le\int cdP+\varepsilon/2$.

We focus on the small $\varepsilon$ regime, so we get this corollary:
\begin{corollary}\label{cor:gen-lb-small}
Under the assumptions of Theorem~\ref{thm:general-lb}, for every $\varepsilon\in(0,1]$,
$$
\dist(\spt\pi_\varepsilon;\gr T)\ge c_{\mathrm{sm}}\varepsilon^{\frac{1}{d+2}}
$$
and hence 
$$
\sup_{(x,y)\in\spt\pi_\varepsilon}\|y-T(x)\|\ge c_{\mathrm{sm}}\varepsilon^{\frac{1}{d+2}}.
$$
\end{corollary}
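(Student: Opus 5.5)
The corollary is the small-$\varepsilon$ case of Theorem~\ref{thm:general-lb}, so the plan is to read off the first branch of \eqref{eqn:gen-lb-all-eps} and then pass from the directed distance to the vertical bias.

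For completeness, I would retrace the first branch as follows. Write $h=d\pi_\varepsilon/dP$ and $r=\dist(\spt\pi_\varepsilon;\gr T)$. The first step is to bound the regularization term by the value gap. Since $\pi_\varepsilon\in\Pi(\mu,\nu)$, we have $\int c\,d\pi_\varepsilon\ge\OT(\mu,\nu)$. Therefore
\[
\frac{\varepsilon}{2}\|h\|_{L^2(P)}^2=\QOT_\varepsilon(\mu,\nu)-\int c\,d\pi_\varepsilon\le\Delta_\varepsilon\le C_{\mathrm{val}}\varepsilon^{\frac{2}{d+2}},
\]
where the last inequality is the upper half of \eqref{eqn:value-gap-rate}, valid for $\varepsilon\in(0,1]$.

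The second step uses \eqref{eqn:fiber-global-L2} from Lemma~\ref{lem:fiber-l2}, which gives $\|h\|_{L^2(P)}^2\ge C_0^{-1}r^{-d}$. Combining the two bounds gives $r^{-d}\le 2C_0C_{\mathrm{val}}\varepsilon^{-d/(d+2)}$. Rearranging yields $r\ge(2C_0C_{\mathrm{val}})^{-1/d}\varepsilon^{1/(d+2)}=c_{\mathrm{sm}}\varepsilon^{1/(d+2)}$.

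Two details need care. First, if $r=0$ the bound in Lemma~\ref{lem:fiber-l2} reads as $+\infty$. This contradicts the finiteness of $\|h\|_{L^2(P)}^2$, so $r>0$ and the division is legitimate. Second, if $r=\infty$ there is nothing to prove.

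The final step handles the vertical bias. The remark following Lemma~\ref{lem:lip-graph} gives $\dist((x,y);\gr T)\le\|y-T(x)\|$ pointwise, simply by choosing $x'=x$ in the infimum. Taking the supremum over $(x,y)\in\spt\pi_\varepsilon$ gives
\[
\sup_{(x,y)\in\spt\pi_\varepsilon}\|y-T(x)\|\ge r\ge c_{\mathrm{sm}}\varepsilon^{\frac{1}{d+2}}.
\]

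The main point to check is that only the upper half of the value-gap rate \eqref{eqn:value-gap-rate} is used. This half needs only compact support, which holds under Assumptions~\ref{assu:mu}--\ref{assu:nu}. Apart from that check, the argument is direct.
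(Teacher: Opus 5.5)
Your proposal is correct and follows the paper's route: the corollary is the $\varepsilon\in(0,1]$ branch of Theorem~\ref{thm:general-lb}, obtained by combining $\frac{\varepsilon}{2}\|h\|_{L^2(P)}^2\le\Delta_\varepsilon\le C_{\mathrm{val}}\varepsilon^{2/(d+2)}$ with the fiberwise bound \eqref{eqn:fiber-global-L2}, and the bias statement follows from $\dist((x,y);\gr T)\le\|y-T(x)\|$ (take $x'=x$). Your explicit handling of the degenerate case $r=0$ is a small extra piece of care that the paper leaves implicit.
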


\subsection{A value-gap bound on the mean-squared bias}\label{sec:L2-bias-gap}
This subsection provides a simple inequality that converts the QOT value gap $\Delta_\varepsilon$ convergence into map-level control.
\begin{lemma}[Value gap controls mean-squared bias]\label{lem:L2-bias-gap}
Assume $\varphi$ is differentiable and Assumption \ref{assu:lipschitz} holds.
Let $\pi_\varepsilon$ be the optimizer of \eqref{eqn:QOT}. 
Then
\begin{equation}\label{eqn:L2-bias-gap}
\int\|y-\nabla\varphi(x)\|^2\pi_\varepsilon(dx,dy)\le2L\Delta_\varepsilon
\end{equation}
In fact, the stronger inequality holds:
$$
\begin{aligned}
\int\|y-\nabla\varphi(x)\|^2\pi(dx,dy)\le2L\Big(\int cd\pi-\OT(\mu,\nu)\Big)\\
\ \forall\pi\in\Pi(\mu,\nu),
\end{aligned}
$$
and \eqref{eqn:L2-bias-gap} follows since $\int cd\pi_\varepsilon-\OT(\mu,\nu)\le\Delta_\varepsilon$.
\end{lemma}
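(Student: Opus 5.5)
The plan is to use the standard ``Fenchel--Young gap'' argument for the Brenier potential, combined with the smoothness of $\varphi$ that $L$-Lipschitzness of $\nabla\varphi$ provides. Since $c(x,y)=\frac12\|x-y\|^2=\frac12\|x\|^2+\frac12\|y\|^2-\langle x,y\rangle$ and all couplings in $\Pi(\mu,\nu)$ share the same marginals, we have for every $\pi\in\Pi(\mu,\nu)$
$$
\int c\,d\pi-\OT(\mu,\nu)=\int\langle x,y\rangle\,d\pi_\star-\int\langle x,y\rangle\,d\pi .
$$
By Brenier's theorem, $\pi_\star=(\mathrm{Id},\nabla\varphi)_\#\mu$ and $\langle x,\nabla\varphi(x)\rangle=\varphi(x)+\varphi^*(\nabla\varphi(x))$ for $\mu$-a.e.\ $x$. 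Integrating and using the marginal constraints, $\int\langle x,y\rangle d\pi_\star=\int\varphi\,d\mu+\int\varphi^*\,d\nu=\int[\varphi(x)+\varphi^*(y)]\,\pi(dx,dy)$. This gives
$$
\int c\,d\pi-\OT(\mu,\nu)=\int\bigl[\varphi(x)+\varphi^*(y)-\langle x,y\rangle\bigr]\,\pi(dx,dy),
$$
whose integrand is nonnegative by Fenchel--Young. Here we need $\varphi,\varphi^*$ to be integrable, which holds because the supports are compact and $\varphi$ is finite and convex on a neighbourhood of the support.

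The key pointwise step is the following claim: if $\nabla\varphi$ is $L$-Lipschitz, then for all $x,y$,
$$
\varphi(x)+\varphi^*(y)-\langle x,y\rangle\ \ge\ \frac{1}{2L}\|y-\nabla\varphi(x)\|^2 .
$$
This is the standard consequence of $L$-smoothness of a convex function, namely that $\varphi^*$ is $\frac1L$-strongly convex. Concretely, for any $z$, the descent lemma gives $\varphi(z)\le\varphi(x)+\langle\nabla\varphi(x),z-x\rangle+\frac L2\|z-x\|^2$, hence
$$
\varphi^*(y)\ge\sup_z\Bigl[\langle z,y\rangle-\varphi(x)-\langle\nabla\varphi(x),z-x\rangle-\tfrac L2\|z-x\|^2\Bigr].
$$
Choosing $z=x+\frac1L(y-\nabla\varphi(x))$ yields $\varphi^*(y)\ge\langle x,y\rangle-\varphi(x)+\frac1{2L}\|y-\nabla\varphi(x)\|^2$.

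Integrating this inequality against $\pi$ gives $\int\|y-\nabla\varphi(x)\|^2d\pi\le2L\bigl(\int c\,d\pi-\OT(\mu,\nu)\bigr)$, which is the stronger statement. Applying it to $\pi=\pi_\varepsilon$, and noting $\int c\,d\pi_\varepsilon\le\QOT_\varepsilon(\mu,\nu)$ because the penalty $\frac\varepsilon2\|h\|^2_{L^2(P)}$ is nonnegative, gives \eqref{eqn:L2-bias-gap}.

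The main obstacle is technical rather than conceptual. The descent-lemma inequality requires $\nabla\varphi$ to be $L$-Lipschitz on all of $\R^d$, or at least on a convex set containing the segment from $x$ to $z$. Assumption~\ref{assu:lipschitz} is stated for $T=\nabla\varphi$ as a map on $\R^d$, so I will read it as a global condition. If only a restriction to $\spt\mu$ were available, I would instead invoke a Lipschitz-gradient convex extension, or use a convex Brenier potential whose gradient is globally $L$-Lipschitz. I would also note that the identity $\int\langle x,y\rangle d\pi_\star=\int\varphi\,d\mu+\int\varphi^*d\nu$ uses $(\nabla\varphi)_\#\mu=\nu$ together with equality in Fenchel--Young on the graph.
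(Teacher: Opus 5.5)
Your proposal is correct and follows essentially the same route as the paper. Both arguments rewrite $\int c\,d\pi-\OT(\mu,\nu)$ as the integral of the Fenchel--Young slack $\varphi(x)+\varphi^*(y)-\langle x,y\rangle$ using equality on the Brenier graph, lower-bound that slack by $\frac1{2L}\|y-\nabla\varphi(x)\|^2$ through the descent lemma with the same choice $z=x+\frac1L(y-\nabla\varphi(x))$, integrate, and finish with nonnegativity of the quadratic penalty. Your remarks on integrability of $\varphi,\varphi^*$ and on reading the Lipschitz assumption globally are sensible technical points that the paper leaves implicit.
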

\paragraph{Proof sketch.}
The cost gap $\int cd\pi-\OT(\mu,\nu)$ equals $\int D_\varphi d\pi$, where $D_\varphi(x,y)=\varphi(x)+\varphi^*(y)-\langle x,y\rangle$ is the Fenchel-Young slack.
If $\nabla\varphi$ is $L$-Lipschitz, smoothness implies the pointwise bound $D_\varphi(x,y)\ge\|y-\nabla\varphi(x)\|^2/(2L)$; integrating gives the claim.

We also prove that this is the sharp bound in $\varepsilon$ for the mean-squared bias through Lemma~\ref{lem:mse-lower-bound}.
Lemma~\ref{lem:L2-bias-gap} yields the sharp $\varepsilon^{2/(d+2)}$ mean-square scale and, by a Markov argument, a corresponding high-probability bound under $\pi_\varepsilon$, which helps us control how most of the weight of $\pi_\varepsilon$ spreads, shown in Theorem~\ref{thm:mse-highprob-bias} below.
\begin{theorem}[Mean-squared and high-probability bias from the value gap]\label{thm:mse-highprob-bias}
Assume Assumptions~\ref{assu:mu},~\ref{assu:nu}, and~\ref{assu:lipschitz} hold.
For every $t>0$ and $\varepsilon\in(0,1]$,
\begin{equation}\label{eqn:high-prob-bias}
\pi_\varepsilon\Bigl(\{(x,y):\|y-T(x)\|\ge t\}\Bigr)\le\frac{2LC_{\mathrm{val}}}{t^2}\varepsilon^{\frac{2}{d+2}}.
\end{equation}
Equivalently, for every $\eta\in(0,1)$, choosing $t=\sqrt{2L C_{\mathrm{val}}}\eta^{-1/2}\varepsilon^{1/(d+2)}$ yields
\[
\pi_\varepsilon\Bigl(\bigl\{(x,y):\|y-T(x)\|\le\sqrt{2L C_{\mathrm{val}}}\eta^{-1/2}\varepsilon^{\frac1{d+2}}\bigr\}\Bigr)\ge1-\eta.
\]
\end{theorem}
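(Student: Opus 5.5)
The plan is to combine Lemma~\ref{lem:L2-bias-gap} with the upper half of the value-gap estimate \eqref{eqn:value-gap-rate}, and then finish with Markov's inequality.

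\emph{Step 1: verify the hypotheses of Lemma~\ref{lem:L2-bias-gap}.} Assumptions~\ref{assu:mu} and~\ref{assu:nu} give compactly supported measures with $\mu\ll\Leb$. By Brenier's theorem, the Monge map has the form $T=\nabla\varphi$ for a convex $\varphi$. Assumption~\ref{assu:lipschitz} says $\nabla\varphi$ is $L$-Lipschitz, so in particular $\varphi$ is differentiable. If $\varphi$ is only defined $\mu$-a.e., we take the convex potential whose gradient is the Lipschitz extension; this is the standard convention already used in stating Assumption~\ref{assu:lipschitz}. Applying the lemma then gives
\[
\int\|y-T(x)\|^2\,\pi_\varepsilon(dx,dy)\le 2L\Delta_\varepsilon .
\]

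\emph{Step 2: insert the value-gap rate.} The supports lie in $B(0,1)$, so for $\varepsilon\in(0,1]$ the upper bound in \eqref{eqn:value-gap-rate} applies and yields $\Delta_\varepsilon\le C_{\mathrm{val}}\varepsilon^{2/(d+2)}$. Combining this with Step 1 gives
\[
\mathbb E_{\pi_\varepsilon}\|Y-T(X)\|^2\le 2LC_{\mathrm{val}}\varepsilon^{2/(d+2)} .
\]

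\emph{Step 3: Markov's inequality.} For any $t>0$, Chebyshev's inequality applied to $\|Y-T(X)\|^2$ under $\pi_\varepsilon$ gives
\[
\pi_\varepsilon(\|y-T(x)\|\ge t)\le t^{-2}\,\mathbb E_{\pi_\varepsilon}\|Y-T(X)\|^2\le \frac{2LC_{\mathrm{val}}}{t^2}\,\varepsilon^{2/(d+2)},
\]
which is \eqref{eqn:high-prob-bias}.

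For the equivalent form, substitute $t=\sqrt{2LC_{\mathrm{val}}}\,\eta^{-1/2}\varepsilon^{1/(d+2)}$. The right-hand side then equals $\eta$. Passing to the complement, $\{\|y-T(x)\|<t\}$, which is contained in $\{\|y-T(x)\|\le t\}$, has $\pi_\varepsilon$-mass at least $1-\eta$.

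The only nontrivial point is Step 1: we must make sure Lemma~\ref{lem:L2-bias-gap} applies to the Brenier potential without extra regularity beyond Assumption~\ref{assu:lipschitz}. In particular, the Fenchel–Young slack bound $D_\varphi(x,y)\ge\|y-\nabla\varphi(x)\|^2/(2L)$ needs $\varphi$ to be globally $L$-smooth. We obtain this by extending $\varphi$ outside $\overline{\Omega}$ so that it remains convex with an $L$-Lipschitz gradient; Assumption~\ref{assu:lipschitz} is read as asserting exactly such a potential. The remaining steps are routine.
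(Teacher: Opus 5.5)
Your proof is correct and follows the paper's argument exactly: Lemma~\ref{lem:L2-bias-gap} combined with the upper bound in \eqref{eqn:value-gap-rate} gives $\int\|y-T(x)\|^2\,d\pi_\varepsilon\le 2LC_{\mathrm{val}}\varepsilon^{2/(d+2)}$, and Markov's inequality applied to $\|y-T(x)\|^2$ finishes it. You also point out that the descent inequality needs $\nabla\varphi$ to be $L$-Lipschitz on all of $\R^d$, not just on $\overline\Omega$; the paper leaves this implicit by reading Assumption~\ref{assu:lipschitz} globally.
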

The bound \eqref{eqn:high-prob-bias} is a tail bound under the measure $\pi_\varepsilon$ and does not directly control $\sup_{(x,y)\in\spt\pi_\varepsilon}\|y-T(x)\|$.

\subsection{Affine Monge maps (Gaussian case): reduction to self-transport and sharp pointwise bias}\label{sec:affine-known-monge}
This subsection gives the $\varepsilon^{1/(d+2)}$ pointwise bias in the regime where the Monge map is known and affine, which includes some truncated Gaussian-Gaussian case.
The main point is that in the affine Brenier regime, the quadratically regularized problem reduces exactly to a self-transport problem, and we can then invoke the sharp self-transport tube thickness of \citet{wiesel2025sparsity}. 
In this subsection, we assume that:
\begin{assumption}\label{assu:affine-Monge-map}
The Monge map is affine:
\[
T(x)=Ax+a,A\in S^d_{++},a\in\R^d.
\]
\end{assumption}
This is a stronger version of Assumption~\ref{assu:lipschitz}.
Assumption~\ref{assu:affine-Monge-map} holds, for instance, when $\mu,\nu$ are nondegenerate Gaussians (then $T$ is affine with $A\in S^d_{++}$). 

We have the following theorem.
\begin{theorem}[Affine case]\label{thm:affine-sharp-pointwise}
Assume Assumption~\ref{assu:mu} and Assumption~\ref{assu:affine-Monge-map}.
Let $\mu_A\coloneqq(A^{1/2})_\#\mu$ and $\Omega_A\coloneqq A^{1/2}\Omega$.
Fix any $r_A>0$ (e.g.\ any radius below the Lipschitz localization scale of $\Omega_A$) and define
$$
\kappa_A\coloneqq\inf_{u\in\spt\mu_A}\ \inf_{0<r\le r_A}\frac{\Leb(\Omega_A\cap B(u,r))}{\omega_dr^d}\in(0,1],
$$
$$
\underline\lambda_{\mu_A}\coloneqq\frac{\underline\lambda_\mu}{\sqrt{\det A}}.
$$
Set
$$
\varepsilon_0\coloneqq\underline\lambda_{\mu_A}\kappa_A\omega_dr_A^{d+2}=\frac{\underline\lambda_\mu}{\sqrt{\det A}}\kappa_A\omega_dr_A^{d+2}.
$$
Then for all $\varepsilon\in(0,\varepsilon_0]$,
\begin{multline}\label{eqn:affine-sharp-pointwise-bias-explicit}
\sup_{(x,y)\in\spt\pi_\varepsilon}\|y-T(x)\|\le8\sqrt{\lambda_{\max}(A)}\left(\frac{\varepsilon}{\underline\lambda_{\mu_A}\kappa_A\omega_d}\right)^{\frac{1}{d+2}}\\
=8\sqrt{\lambda_{\max}(A)}\left(\frac{\sqrt{\det A}}{\underline\lambda_\mu\kappa_A\omega_d}\right)^{\frac{1}{d+2}}\varepsilon^{\frac{1}{d+2}}.
\end{multline}
Equivalently, using $\omega_d=\pi^{d/2}/\Gamma(\frac d2+1)$,
$$
\begin{aligned}
\sup_{(x,y)\in\spt\pi_\varepsilon}\|y-T(x)\|\le&\ 8\sqrt{\lambda_{\max}(A)}\\
&\cdot\Big(\frac{\sqrt{\det A}\Gamma(\frac d2+1)}{\underline\lambda_\mu\kappa_A\pi^{d/2}}\Big)^{\frac{1}{d+2}}\varepsilon^{\frac{1}{d+2}}.
\end{aligned}
$$
\end{theorem}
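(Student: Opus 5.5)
The plan is to reduce the affine problem exactly to a self-transport QOT problem for $\mu_A$ by a linear change of variables. Then I will apply the sharp self-transport tube bound of \citet{wiesel2025sparsity} and transport the bound back.

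\textbf{Step 1: change of variables.} Set $S\coloneqq A^{1/2}$. Define $\Phi(x,y)\coloneqq(Sx,\,S^{-1}(y-a))$, so $\Phi$ maps $\gr T$ onto the diagonal. Since $T_\#\mu=\nu$ we have $(S^{-1}(\cdot-a))_\#\nu=(S^{-1}T)_\#\mu=S_\#\mu=\mu_A$. Hence $\Phi_\#$ is a bijection $\Pi(\mu,\nu)\to\Pi(\mu_A,\mu_A)$ that also maps $\mu\otimes\nu$ to $\mu_A\otimes\mu_A$. Because $\Phi$ is a bijection, Radon--Nikodym densities transform by composition, so $\|d\pi/dP\|_{L^2(P)}=\|d(\Phi_\#\pi)/d(\mu_A\otimes\mu_A)\|_{L^2}$.

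For the cost, write $u=Sx$ and $v=S^{-1}(y-a)$, so $y=Sv+a$. Then $\langle x,y\rangle=\langle S^{-1}u,Sv\rangle+\langle x,a\rangle=\langle u,v\rangle+\langle x,a\rangle$. Therefore
\[
\tfrac12\|x-y\|^2=\tfrac12\|x\|^2+\tfrac12\|y\|^2-\langle x,a\rangle-\langle u,v\rangle .
\]
The first three terms integrate to a constant on $\Pi(\mu,\nu)$. Likewise $-\langle u,v\rangle=\tfrac12\|u-v\|^2-\tfrac12\|u\|^2-\tfrac12\|v\|^2$, whose last two terms are constant on $\Pi(\mu_A,\mu_A)$. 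So the QOT objective for $(\mu,\nu)$ equals the quadratic-cost QOT objective for $(\mu_A,\mu_A)$ plus a constant. It follows that $\Phi_\#\pi_\varepsilon=\pi^A_\varepsilon$, the self-transport optimizer, with the same $\varepsilon$, and $\spt\pi_\varepsilon=\Phi^{-1}(\spt\pi^A_\varepsilon)$.

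\textbf{Step 2: self-transport bound.} The measure $\mu_A$ has density $\rho_\mu(S^{-1}u)/\sqrt{\det A}\ge\underline\lambda_{\mu_A}$ on $\Omega_A$. Hence for $u\in\spt\mu_A$ and $r\le r_A$,
\[
\mu_A(B(u,r))\ge\underline\lambda_{\mu_A}\kappa_A\omega_d r^d .
\]
I then invoke the self-transport sparsity result of \citet{wiesel2025sparsity}: under such a lower mass bound, $\sup_{(u,v)\in\spt\pi^A_\varepsilon}\|u-v\|\le C\,\bigl(\varepsilon/(\underline\lambda_{\mu_A}\kappa_A\omega_d)\bigr)^{1/(d+2)}$, valid as long as the relevant radius stays below $r_A$. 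This is exactly the condition $\varepsilon\le\varepsilon_0$. The main obstacle is matching the hypotheses and constants of that result, namely getting constant $8$ in terms of the local mass lower bound only up to scale $r_A$. If it is stated via a global $\rho(r)\ge c r^d$, I would verify that its proof only uses balls of radius at most comparable to the tube width, which at $\varepsilon\le\varepsilon_0$ is at most $r_A$.

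\textbf{Step 3: return to original coordinates.} For $(x,y)\in\spt\pi_\varepsilon$, let $(u,v)=\Phi(x,y)$. Then
\[
y-T(x)=Sv+a-(S^2x+a)=S(v-u),
\]
so $\|y-T(x)\|\le\|S\|_{\mathrm{op}}\|u-v\|=\sqrt{\lambda_{\max}(A)}\,\|u-v\|$. Combining with Step 2 gives \eqref{eqn:affine-sharp-pointwise-bias-explicit}. Substituting $\underline\lambda_{\mu_A}=\underline\lambda_\mu/\sqrt{\det A}$ and $\omega_d=\pi^{d/2}/\Gamma(\tfrac d2+1)$ yields the equivalent forms.
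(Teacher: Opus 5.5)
Your argument is the paper's argument. Your $\Phi(x,y)=(A^{1/2}x,\,A^{-1/2}(y-a))$ is exactly the composition of the paper's two changes of variables: first $\mathcal F^{-1}(x,y)=(x,T^{-1}(y))$, then $A^{1/2}\times A^{1/2}$. Your direct expansion of $\langle x,y\rangle$ replaces the paper's Fenchel--Young slack identity $D_\varphi(x,T(v))=\tfrac12\langle v-x,A(v-x)\rangle$, and the two are interchangeable. Steps 1 and 3 are correct as written.

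The one piece you leave open is Step 2. As stated, you only get an unspecified constant $C$, while the theorem asserts the constant $8$. Your worry about checking which radii the Wiesel--Xu proof uses is also unnecessary. The paper closes this with two concrete ingredients from Wiesel and Xu:
\begin{itemize}
\item their Lemma~3.5 gives $\|u-w\|^2\le 4M$ on the support, with $M=\sup_{\spt\mu_A}f_\varepsilon$;
\item the proof of their Theorem~3.1 gives $M\le 16\,\delta_{\mathrm{ST}}^{\mu_A}(\varepsilon)$.
\end{itemize}
Together these give $\|u-w\|\le 8\,\delta_{\mathrm{ST}}^{\mu_A}(\varepsilon)^{1/2}$.

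Localization to scale $r_A$ is then automatic, because $\delta_{\mathrm{ST}}^{\mu_A}(\varepsilon)=\inf\{r>0: r\rho_{\mu_A}(\sqrt r)>\varepsilon\}$ involves $\rho_{\mu_A}$ only at the single radius $\sqrt r$. Set $r_*=(\varepsilon/(\underline\lambda_{\mu_A}\kappa_A\omega_d))^{2/(d+2)}$. The condition $\varepsilon\le\varepsilon_0$ is exactly $r_*\le r_A^2$, so your mass lower bound applies at radius $\sqrt{r_*}$ and gives $r_*\rho_{\mu_A}(\sqrt{r_*})\ge\varepsilon$. Monotonicity of $\rho_{\mu_A}$ then yields $\delta_{\mathrm{ST}}^{\mu_A}(\varepsilon)\le r_*$, which is the stated bound with constant $8$.
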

\paragraph{Proof sketch.}
In the affine Brenier regime, we can ``straighten'' the Monge graph into the diagonal and reduce QOT$(\mu,\nu)$ to a self-transport QOT problem, to which sharp tube bounds in \citealp[Theorem~3.1,~3.4]{wiesel2025sparsity} apply.

For nondegenerate untruncated Gaussians on $\R^d$, the Monge map for the quadratic cost is affine.
However, if one truncates Gaussians to bounded domains to enforce compact support, which are needed to satisfy Assumption~\ref{assu:mu}-\ref{assu:nu}, the Monge map is typically not affine unless the truncations are chosen compatibly with the affine map.

To make this precise, let $\Omega_0,\Omega_1\subset\R^d$ be bounded Lipschitz domains with nonempty interior and let $\Sigma_0,\Sigma_1\in\mathbb S_{++}^d$ and $m_0,m_1\in\R^d$.
Define the (renormalized) truncated Gaussians
\begin{align}
d\mu(x)=&\frac{1}{Z_0}\exp\Big(-\frac12(x-m_0)^\top\Sigma_0^{-1}(x-m_0)\Big)\nonumber\\
&\cdot\mathbf1_{\Omega_0}(x)dx,\label{eqn:trunc-mu}\\
d\nu(y)=&\frac{1}{Z_1}\exp\Big(-\frac12(y-m_1)^\top\Sigma_1^{-1}(y-m_1)\Big)\nonumber\\
&\cdot\mathbf1_{\Omega_1}(y)dy,\label{eqn:trunc-nu}
\end{align}
where $Z_i>0$ are normalizing constants.
\begin{proposition}[Affine Monge maps between truncated Gaussians: necessary and sufficient conditions]
\label{prop:trunc-gauss-affine}
Consider $\mu,\nu$ as in \eqref{eqn:trunc-mu}-\eqref{eqn:trunc-nu} and the quadratic cost $c(x,y)=\frac12\|x-y\|^2$.
Assume that the (unregularized) Monge map transporting $\mu$ to $\nu$ is affine:
\begin{equation}\label{eqn:affine-brenier-trunc}
T(x)=Ax+a,A=A^\top\succ0,a\in\R^d.
\end{equation}
Then necessarily
\begin{equation}\label{eqn:compatibility-conditions}
\begin{aligned}
\Omega_1&=A\Omega_0+a\text{ (up to Lebesgue-null sets)},\\
m_1&=Am_0+a,\\
\Sigma_1&=A\Sigma_0A.
\end{aligned}
\end{equation}
Conversely, if \eqref{eqn:compatibility-conditions} holds, then $T(x)=Ax+a$ satisfies $T_\#\mu=\nu$ and, since $T=\nabla\big(\frac12 x^\top Ax+a^\top x\big)$ is the gradient of a convex function, it is the Monge map.
\end{proposition}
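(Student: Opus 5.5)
Since $T(x)=Ax+a$ with $A\succ0$ pushes $\mu$ to $\nu$, the whole proof runs through the change-of-variables formula for pushforward densities.

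\textbf{Necessity.} Write $\rho_0,\rho_1$ for the densities of $\mu,\nu$ in \eqref{eqn:trunc-mu}--\eqref{eqn:trunc-nu}. Because $A$ is invertible, $T$ is a diffeomorphism of $\R^d$. The identity $T_\#\mu=\nu$ then becomes the a.e.\ identity
$$\rho_1(y)=\rho_0\bigl(A^{-1}(y-a)\bigr)\,|\det A|^{-1},$$
so $\rho_1(y)=\frac{1}{Z_0\det A}\exp\bigl(-\tfrac12 (A^{-1}(y-a)-m_0)^\top\Sigma_0^{-1}(A^{-1}(y-a)-m_0)\bigr)\mathbf 1_{A\Omega_0+a}(y)$. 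We read off the three conditions in turn.

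\emph{Supports.} Gaussian factors are strictly positive, so $\{\rho_1>0\}$ equals $\Omega_1$ up to null sets, and it also equals $A\Omega_0+a$. This gives the first condition.

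\emph{Exponents.} On the open set $\Omega_1$, which is nonempty, the two expressions for $\log\rho_1$ agree a.e. Both sides are quadratic polynomials in $y$, so agreement a.e.\ on an open set forces equality as polynomials. Rewrite the pulled-back exponent as
$$-\tfrac12\,(y-a-Am_0)^\top A^{-1}\Sigma_0^{-1}A^{-1}(y-a-Am_0).$$
Comparing second-order terms gives $\Sigma_1^{-1}=A^{-1}\Sigma_0^{-1}A^{-1}$, i.e.\ $\Sigma_1=A\Sigma_0A$; here we use $A=A^\top$. Comparing first-order terms with this equal nondegenerate quadratic form gives $\Sigma_1^{-1}m_1=\Sigma_1^{-1}(Am_0+a)$, hence $m_1=Am_0+a$. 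The constant terms then only fix the relation between the normalizations, $Z_1=Z_0\det A$, which follows automatically from both sides being probability densities.

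\textbf{Sufficiency.} Assume \eqref{eqn:compatibility-conditions}. Run the same computation in reverse: the pushforward density of $\mu$ under $T$ equals $C\exp(-\frac12(y-m_1)^\top\Sigma_1^{-1}(y-m_1))\mathbf 1_{\Omega_1}$ for some constant $C>0$. Both this density and $\rho_1$ are probability densities, so $C=1/Z_1$ and $T_\#\mu=\nu$.

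Next, $T=\nabla(\frac12x^\top Ax+a^\top x)$ is the gradient of a convex function, since $A\succeq0$. Also $\mu\ll\Leb$ and both measures are compactly supported. Brenier's theorem (uniqueness) therefore identifies $T$ as the Monge map.

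\textbf{Main obstacle.} The only delicate point is the polynomial identification step. It requires equality a.e.\ on a set of positive measure, which is guaranteed because $\Omega_1$ has nonempty interior. It also relies on the symmetry of $A$ to write the pulled-back form as $A^{-1}\Sigma_0^{-1}A^{-1}$.
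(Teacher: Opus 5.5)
Your proposal is correct and follows essentially the same route as the paper. The steps coincide: the change-of-variables formula for the pushforward density, matching the positivity sets to get $\Omega_1=A\Omega_0+a$, treating the log-densities as quadratic polynomials on the nonempty open set $\Omega_1$ to read off $\Sigma_1=A\Sigma_0A$ and $m_1=Am_0+a$, and, for the converse, the reverse computation together with $T$ being the gradient of a convex function (your explicit remarks on $Z_1=Z_0\det A$ and on Brenier uniqueness just spell out what the paper leaves implicit).
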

\paragraph{Proof sketch.}
If $T(x)=Ax+a$ transports $\mu$ to $\nu$, then the pushforward density identity forces the indicator sets to match, hence $\Omega_1=A\Omega_0+a$.
On $\Omega_1$, the Gaussian factors must agree; taking logarithms yields an identity between two quadratic polynomials in $y$, so matching coefficients gives $m_1=Am_0+a$ and $\Sigma_1=A\Sigma_0A$.
Conversely, under these compatibility conditions, $T_\#\mu=\nu$ by change of variables and $T=\nabla(\frac12 x^\top Ax+a^\top x)$ is a Monge map.

Therefore, truncating both Gaussians on the same bounded domain (e.g., a large ball) generally destroys Assumption~\ref{assu:affine-Monge-map} unless the two truncated laws coincide.
To retain Assumption~\ref{assu:affine-Monge-map} within a compact-support framework, one should truncate $\mu$ on a domain $\Omega_0$ and choose $\nu$ so that $\Omega_1=T(\Omega_0)$ and $(m_1,\Sigma_1)=(Am_0+a,A\Sigma_0A)$, i.e.\ $\nu=T_\#\mu$, so that the Monge map remains exactly $T(x)=Ax+a$.
\section{Related works}\label{sec:related}
\subsection{Quadratically regularized OT (QOT)'s structure, potentials, and behavior}
Quadratic regularization of OT has been studied both for computational reasons and as a means to obtain sparse couplings.
\citet{blondel2018smooth} is one of the earliest to propose applying strongly convex primal regularization (including squared $\ell^2$ penalties) to promote sparsity. 
In the discrete/graph setting, \citet{essid2018quadratically} analyzed quadratically regularized OT (minimum-cost flows) and exploited its second-order dual structure to derive Newton-type algorithms, with a detailed description of how the solution structure changes as the regularization vanishes.

On the asymptotic side, \citet{garrizmolina2024infinitesimal} connected the infinitesimal behavior of QOT to porous-medium-type self-similar profiles.
Finally, \citet{nutz2025quadratically} proved the existence of QOT potentials under square-integrable costs, analyzed non-uniqueness phenomena (linked to sparsity), and provided qualitative results of sparsity in continuous settings.

\paragraph{Relation to other regularization schemes.}
The tube-versus-density mechanism used in Theorem~\ref{thm:general-lb} is not specific to the quadratic exponent. 
Using H\"older's inequality instead of the Cauchy-Schwarz step in Lemma~\ref{lem:fiber-l2}, we get the lower bound $r^{-d(p-1)}$ for the $L^p$ norm of the density. 
Thus any sharp value-gap upper bound for the corresponding $L^p$-regularized problem can be converted into a support lower bound by the same argument.
The Orlicz-space framework of \citet{lorenz2022orlicz} provides the natural functional-analytic setting for such power-type penalties.

ENOT is different in nature: it regularizes the dual Kantorovich potentials for neural optimal transport rather than imposing a primal $f$-divergence penalty on couplings with respect to $\mu\otimes\nu$ \citep{buzun2024expectile}. 
Therefore, the support statements proved here do not transfer directly to ENOT. 
Nevertheless, the hinge/slack viewpoint in QOT suggests a useful conceptual bridge: regularizing positive dual infeasibility can control bias while preserving sparsity or near-sparsity, but the exact localization exponent for ENOT requires a separate analysis of its dual-side objective.

A fundamental quantitative object is the value gap $\Delta_\varepsilon=\QOT_\varepsilon(\mu,\nu)-\OT(\mu,\nu)$.
Under quantization-type assumptions, \citet{eckstein2024convergence} derived convergence rates for a broad class of regularizers; specializing to $L^2$ regularization yields
$$
\Delta_\varepsilon=\Theta\left(\varepsilon^{\frac{2}{d+2}}\right)(\varepsilon\downarrow0),
$$
which is the scaling used throughout our analysis.

Recent work shows that sparsity-promoting regularization need not forfeit parametric statistical rates.
\citet{gonzalez2025sample} established parametric sample complexity and CLTs for QOT costs, couplings, and dual potentials.
This further motivates quantitative localization bounds such as ours, since geometric control of $\spt\pi_\varepsilon$ directly enters both map estimation and statistical complexity arguments.

\subsection{Known quantitative bounds for QOT}\label{sec:known}
We briefly recall the main quantitative bounds on the support of $\pi_\varepsilon$, which serve as benchmarks for our results \citep{gonzalez2024sparsity,wiesel2025sparsity}. 
\paragraph{One-dimensional case.}
A key feature of quadratically regularized OT is that optimal couplings are typically supported on a thickened graph. 
To quantify this, we look at vertical fibers over $x$ and their diameters.
For a coupling $\pi\in\mathcal P(\R^d\times\R^d)$ and $x\in\R^d$, the vertical fiber of $\spt\pi$ over $x$ is
$$
\mathcal Y_\pi(x)\coloneqq\{y\in\R^d:(x,y)\in\spt\pi\},
$$
with (fiber) thickness
$$
\tau_\pi(x)\coloneqq\diam\mathcal Y_\pi(x)=\sup_{y_1,y_2\in\mathcal Y_\pi(x)}\|y_1-y_2\|.
$$
When the coupling is the quadratically regularized optimizer $\pi_\varepsilon$ (defined below), we abbreviate $\mathcal Y_\varepsilon(x)\coloneqq\mathcal Y_{\pi_\varepsilon}(x)$ and $\tau_\varepsilon(x)\coloneqq\tau_{\pi_\varepsilon}(x)$.

When $d=1$, if $\mu,\nu$ have continuous densities bounded away from $0$ and $\infty$, \citet{gonzalez2024sparsity} showed the sharp rate
$$
\sup_x\tau_\varepsilon(x)=\Theta(\varepsilon^{1/3}),\dist(\spt\pi_\varepsilon;\gr T)=\Theta(\varepsilon^{1/3}),
$$
and establish uniform strong convexity bounds for $f_\varepsilon$ for all $\varepsilon<\varepsilon_0$, here $\varepsilon_0$ is a small constant.
\paragraph{General dimension case.}
\citet{wiesel2025sparsity} proved the first quantitative pointwise bounds on the bias relative to the Monge map.
If $\spt\mu$ is star-shaped and $T$ is $L$-Lipschitz (where $\varphi$ is the Brenier potential for $(\mu,\nu)$), the bias relative to the Monge map satisfies
$$
\sup_{(x,y)\in\spt\pi_\varepsilon}\|y-T(x)\|\le C(L+1)^{3/2}\sqrt[4]{\delta\left(\frac{\delta(\varepsilon)}{L+1}\right)}
$$
Here, $\spt\mu$ is star-shaped if there exists $x'\in\spt\mu$ such that for all $x\in\spt\mu$, the line segment from $x'$ to $x$ lies in $\spt\mu$.
If $\mu$ is comparable to the uniform measure on the unit ball $\mu=\mathrm{Unif}(B(0,1))$ (that is, $\mu$ admits a density bounded away from $0$ and $\infty$ on a set with Lipschitz boundary), then $\rho(r)\asymp r^d$ and thus $\delta(\varepsilon)\asymp\varepsilon^{1/(1+d)}$.
Inserting this into the bound above yields the rate
$$
\sup_{(x,y)\in\spt\pi_\varepsilon}\|y-T(x)\|\lesssim\varepsilon^{\frac{1}{4(d+1)^2}},
$$
which is far from the conjectured $\varepsilon^{1/(d+2)}$ scale in general case.

In the self-transport case $\mu=\nu$ where the optimal coupling is $\spt\pi_*=\{(x,x):x\in\spt\mu\}$, by introducing the improved $\varepsilon$-spread: $\delta_{\mathrm{ST}}(\varepsilon)\coloneqq\inf\{r>0:r\rho(\sqrt{r})>\varepsilon\}$, one has the sharp (matching) bounds:
$$
\sup_{(x,y)\in\spt\pi_\varepsilon}\|y-T(x)\|\asymp\delta_{\mathrm{ST}}(\varepsilon)^{1/2}.
$$ 
Based on that, when $\mu$ is comparable to $\mathrm{Unif}(B(0,1))$, they obtain matching upper/lower bounds:
$$
\sup_{(x,y)\in\spt\pi_\varepsilon}\|y-T(x)\|\asymp\varepsilon^{\frac{1}{2+d}},
$$
thereby realizing the conjectured exponent $1/(2+d)$ in full generality under standard spread/Lipschitz conditions.

Compared to these results, our Theorem~\ref{thm:general-lb} provides a general lower bound with the conjectured exponent $1/(d+2)$ under standard regularity, ruling out any faster-than-$\varepsilon^{1/(d+2)}$ concentration around $\gr T$.
Moreover, in the affine Brenier regime (including Gaussian-to-Gaussian transport), our reduction in Section~\ref{sec:affine-known-monge} combined with Theorem~\ref{thm:affine-sharp-pointwise} yields the matching pointwise bound $b\asymp\varepsilon^{1/(d+2)}$.
\section{Experiments}\label{sec:experiments}
We empirically test the tube/bias scaling predicted by Theorem~\ref{thm:affine-sharp-pointwise} in the affine Brenier regime, where the Monge map is known and can be evaluated exactly.
Specifically, the theorem predicts that for sufficiently small $\varepsilon$,
$$
\sup_{(x,y)\in\spt\pi_\varepsilon}\|y-T(x)\|\lesssim\sqrt{\lambda_{\max}(A)}\varepsilon^{\frac{1}{d+2}},T(x)=Ax+a.
$$

\begin{figure*}[ht]
\centering
\hfill
\begin{subfigure}[t]{0.49\textwidth}
\centering
\includegraphics[width=\linewidth]{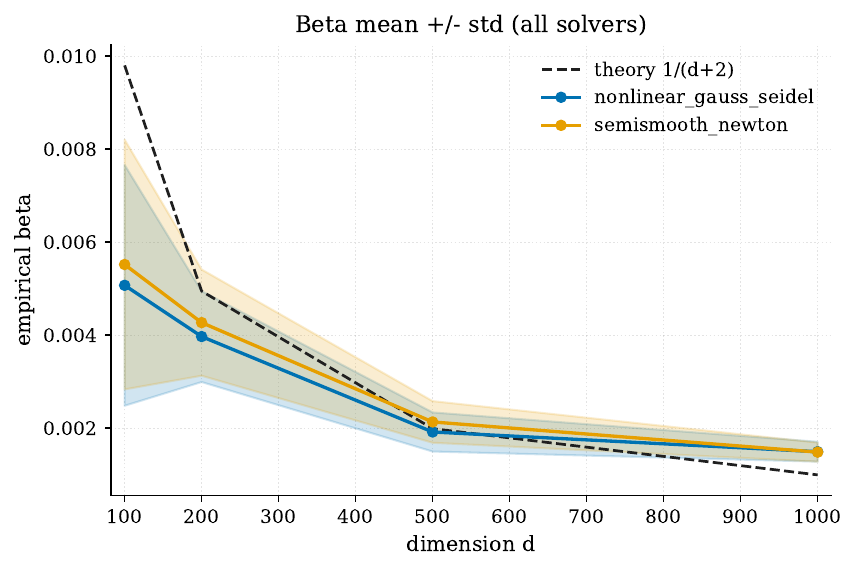}
\caption{Estimated scaling exponent $\widehat\beta$ versus the theoretical prediction $1/(d+2)$ from Theorem~\ref{thm:affine-sharp-pointwise}.}
\label{fig:beta-errorbars}
\end{subfigure}\hfill
\begin{subfigure}[t]{0.49\textwidth}
\centering
\includegraphics[width=\linewidth]{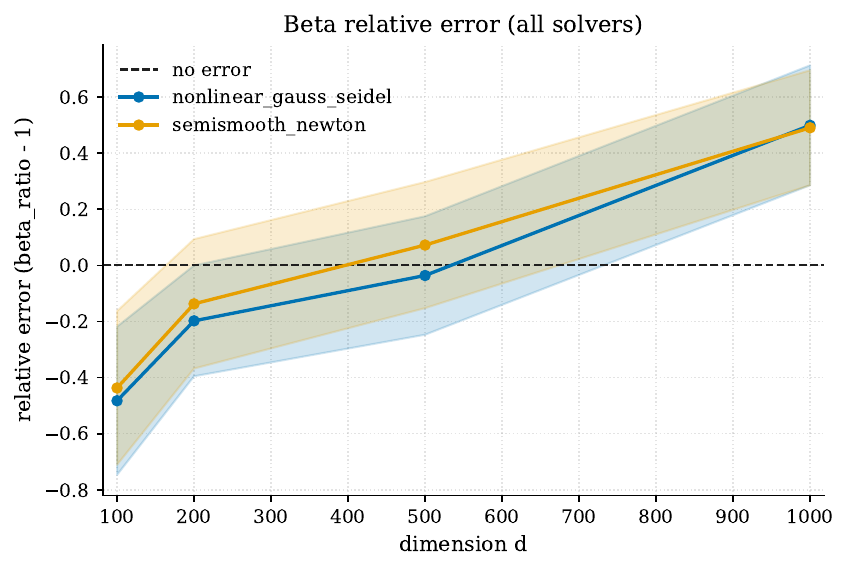}
\caption{Relative error of the fitted scaling exponent.}
\label{fig:beta-ratio-error}
\end{subfigure}
\hfill
\caption{Affine Brenier scaling diagnostics across dimensions: fitted log-log slope $\widehat\beta$ (left) and relative error $(d+2)\widehat\beta-1$ (right). Error bars show $\pm1$ empirical standard deviation over $R=10$ random seeds.}
\label{fig:beta-scaling}
\end{figure*}

\paragraph{Affine benchmark with known Monge map.}
We construct compactly supported pairs $(\mu,\nu)$ such that $\nu=T_\#\mu$ with a prescribed affine map $T(x)=Ax+a$ (cf.\ Proposition~\ref{prop:trunc-gauss-affine}), so that $T$ is exactly the quadratic-cost Monge map.
We discretize $\mu,\nu$ by empirical measures with $N=M=2000$ samples and uniform weights.
(Full details of the truncated-Gaussian family, the choice of $A$, and the discretization protocol are provided in Appendix~\ref{sec:exp-synthetic-data}-\ref{sec:exp-discrete-qot}.)

\paragraph{Discrete QOT solver.}
For each $\varepsilon$ on a log-spaced grid, we solve the discrete QOT problem \eqref{eqn:exp-discrete-primal-qot} using two deterministic dual/KKT solvers from \citet{lorenz2021quadratically}: nonlinear Gauss-Seidel (Algorithm~\ref{alg:nlgs-qot}) and a (regularized) semismooth Newton method (Algorithm~\ref{alg:ssn-qot}), noted in Appendix~\ref{sec:solvers}.
We stop when the maximum absolute marginal residual, denoted in \eqref{eqn:exp-residual}, is below $\mathrm{tol}=\mathrm{initTol}\cdot\varepsilon$ with $\mathrm{initTol}=10^{-2}$ (i.e., a relative marginal tolerance of $10^{-2}$).

This corresponds to a relative marginal tolerance that is uniform over $\varepsilon$: indeed, by \eqref{eqn:exp-discrete-kkt} we have
$$
\sum_{j=1}^M\pi_{ij}=\frac{a_i}{\varepsilon}\sum_{j=1}^Mb_j[f_i+g_j-c_{ij}]_+=a_i\Bigl(1+\frac{r_i}{\varepsilon}\Bigr),
$$
$$
\sum_{i=1}^N\pi_{ij}=b_j\Bigl(1+\frac{s_j}{\varepsilon}\Bigr),
$$
where $(r,s)=F(f,g)$ are the residuals defined in \eqref{eqn:exp-residual}.
Thus $|r_i|\le\mathrm{initTol}\varepsilon$ and $|s_j|\le\mathrm{initTol}\varepsilon$ guarantee
$$
\bigl|\sum_j\pi_{ij}-a_i\bigr|\le\mathrm{initTol}\cdot a_i\text{ and }\bigl|\sum_i\pi_{ij}-b_j\bigr|\le\mathrm{initTol}\cdot b_j,
$$
so the stopping rule enforces a fixed relative error in the marginals across all $\varepsilon$.

Let $\pi_\varepsilon^{N,M}$ denote the unique minimizer of the discrete problem \eqref{eqn:exp-discrete-primal-qot}.
To mitigate floating-point noise, we estimate the numerical support using a fixed threshold $\tau=10^{-12}$ and report the discrete proxy
$$
\widehat{\mathrm{bias}}(\varepsilon)\coloneqq \max_{(i,j):\pi^{N,M}_{\varepsilon,ij}>\tau}\|y_j-T(x_i)\|.
$$

\paragraph{Scaling test.}
For each dimension $d\in\{100,200,500,1000\}$ and each random seed, we fit a log-log slope
$$
(\widehat\alpha,\widehat\beta)\in\arg\min_{\alpha,\beta}\sum_{k=1}^K\Big(\log\widehat{\mathrm{bias}}(\varepsilon_k)-\alpha-\beta\log\varepsilon_k\Big)^2,
$$
through a grid $(\varepsilon_k)_{k=1}^K$ of $\varepsilon$, and compare $\widehat\beta$ to the theoretical exponent $1/(d+2)$.
Let $C=(c_{ij})_{1\le i\le N,1\le j\le M}$ be the discrete cost matrix.
We set $K=10,(\varepsilon_k)_{k=1}^{10}=(10^{-8},5\cdot10^{-8},10^{-7},5\cdot10^{-7},10^{-6},5\cdot10^{-6},10^{-5},5\cdot10^{-5},10^{-4},5\cdot10^{-4})$, multiplied by $c_{\mathrm{med}}=\operatorname{median}\{c_{ij}:1\le i\le N,1\le j\le M\}$, and we report the mean $\pm$ one empirical standard deviation over $R=10$ seeds.
To directly quantify deviations from the prediction across dimensions, we also report the relative error $\mathrm{RelErr}(d)\coloneqq \widehat\beta/(1/(d+2))-1=(d+2)\widehat\beta-1$
Using $c_{\mathrm{med}}$ provides a robust typical cost scale; a closely related median-normalization is standard in entropic OT practice and implementations \cite{cuturi2013sinkhorn}.
\paragraph{Results.}
Figure~\ref{fig:beta-errorbars} shows that the fitted exponent $\widehat\beta$ decreases with $d$ and that the two deterministic solvers agree within the reported variability.
Compared with the theoretical benchmark $1/(d+2)$, the fitted slopes are below the prediction for $d=100$ and $d=200$, are closest around $d=500$ and $d=1000$.
Figure~\ref{fig:beta-ratio-error} summarizes this behavior via the relative error $(d+2)\widehat\beta-1$, which increases with $d$ and crosses $0$ between $d=500$ and $d=1000$.
We attribute these deviations to finite-sample and non-asymptotic effects: for smaller $d$, the accessible $\varepsilon$ range may not yet be in the asymptotic regime of Theorem~\ref{thm:affine-sharp-pointwise}, while for very large $d$ the target exponent $1/(d+2)$ becomes extremely small, making the log-log regression sensitive to discretization floors (finite $N,M$), the support threshold $\tau$, and solver tolerances, which can bias slope estimates upward.

\section{Conclusion}\label{sec:conclusion}
In this paper, we quantified how the QOT optimizer $\pi_\varepsilon$ localizes around the Monge coupling for the quadratic cost as $\varepsilon\downarrow0$.
First, under standard regularity of the marginals and a Lipschitz Monge map, we proved an information-theoretic lower bound showing that the directed Hausdorff distance from $\spt\pi_\varepsilon$ to the Monge graph $\gr T$ cannot shrink faster than order $\varepsilon^{1/(d+2)}$.
Second, we linked geometry to optimality by showing that the value gap $\Delta_\varepsilon=\QOT_\varepsilon(\mu,\nu)-\OT(\mu,\nu)$ controls the mean-squared deviation from the Monge map under $\pi_\varepsilon$, and we combined this inequality with sharp value-gap rates in \citet{eckstein2024convergence} to obtain the $\varepsilon^{2/(d+2)}$ scale and corresponding high-probability bounds.
Third, in the affine Brenier regime (including Gaussian-to-Gaussian transport), we gave an exact reduction to a self-transport QOT problem and obtained a matching pointwise tube bound of order $\sqrt{\lambda_{\max}(A)}\varepsilon^{1/(d+2)}$ via sharp self-transport sparsity estimates.
We also outlined synthetic experiments designed to directly probe this predicted scaling in the affine setting.

\paragraph{Limitations.} 
The matching pointwise upper bound proved here is restricted to the affine Brenier regime, so closing the gap between existing general upper bounds and the $\varepsilon^{1/(d+2)}$ barrier in the fully non-affine case remains open.

\paragraph{Open questions.} 
A central theoretical direction is to close the remaining gap between general upper bounds and the $\varepsilon^{1/(d+2)}$ lower bound in the fully non-affine setting, ideally under minimal assumptions on $(\mu,\nu)$ and on the regularity of the Brenier potential.
From a modeling and computational perspective, extending the analysis to non-compact marginals (e.g., untruncated Gaussians, which can be achieved by extending Wiesel's proof in \citet{wiesel2025sparsity} to the unbounded support case) and quantifying the effects of truncation and discretization on sparsity and bias would strengthen the connection to practice.
However, this is a challenging task because the distributions are no longer compactly supported.
We may start with \citet{nutz2025quadratically}, where Polish probability spaces are considered.

\paragraph{Acknowledgments.} The authors would like to thank anonymous reviewers and the Area Chair for their useful feedback to improve our work. This work is supported by grant VUNI.2526.CAIR.04 and Seed Fund 10000225 CECS.

\section*{Impact Statement}
This work is theoretical and studies the localization and sparsity properties of quadratically regularized optimal transport.
Potential positive impacts include a clearer understanding of sparsity-accuracy tradeoffs in regularized OT objectives that are widely used in machine learning pipelines (e.g., map estimation and correspondence construction).
We do not anticipate direct negative societal impacts from the theoretical results themselves; however, as with any methodological advance in machine learning, downstream applications may have dual-use concerns depending on the domain of deployment.
\bibliography{bib}
\bibliographystyle{icml2026}
\newpage
\appendix
\onecolumn
\section{Additional proofs}
\subsection{Proof of Section~\ref{sec:general-lb}}
\begin{proof}[Proof of Lemma~\ref{lem:lip-graph}]
Fix $(x,y)\in\R^d\times\R^d$.
By definition, $\dist\big((x,y);\gr T\big)=\inf_{x'\in\R^d} \big\|(x,y)-(x',T(x'))\big\|$.
For any $x'\in\R^d$, write
$$
\big\|(x,y)-(x',T(x'))\big\|^2=\|x-x'\|^2+\|y-T(x')\|^2.
$$
Let $a\coloneqq\|y-T(x)\|$ and $t\coloneqq\|x-x'\|\ge 0$.
By the reverse triangle inequality and Lipschitzness of $T$,
$$
\|y-T(x')\|\ge\big|\|y-T(x)\|-\|T(x)-T(x')\|\big|\ge\max\{0,a-Lt\}.
$$
Therefore,
$$
\big\|(x,y)-(x',T(x'))\big\|^2\ge t^2+\big(\max\{0,a-Lt\}\big)^2.
$$

If $a\le Lt$, then $t^2 \ge a^2/L^2 \ge a^2/(1+L^2)$.
If $a>Lt$, then $t^2+(a-Lt)^2=(1+L^2)t^2-2Lat+a^2$, whose minimum over $t\ge 0$ is attained at $t^\star=La/(1+L^2)$ and equals $a^2/(1+L^2)$.
In both cases,
$$
\big\|(x,y)-(x',T(x'))\big\|^2\ge\frac{a^2}{1+L^2}.
$$
Taking the infimum over $x'$ and then the square roots yields the claim.
\end{proof}
\begin{proof}[Proof of Lemma~\ref{lem:fiber-l2}]
For every $y$, we see that the marginal constraint $\int h(x,y)\mu(dx)=1$ holds. By Cauchy-Schwarz,
$$
1=\int_{X_y}hd\mu\le\Big(\int_{X_y} h^2d\mu\Big)^{1/2}\mu(X_y)^{1/2},
$$
yielding (i). 

For (ii), If $x\in X_y$, then $h(x,y)>0$. 
Since $h$ is the continuous hinge representative from \eqref{eqn:optimal_density} and $(x,y)\in\spt P$, every sufficiently small neighborhood of $(x,y)$ has positive $\pi_\varepsilon$-mass. 
Hence $(x,y)\in\spt\pi_\varepsilon$. With $r'=\dist((x,y);\gr T)$, Lemma~\ref{lem:lip-graph} gives $\|y-T(x)\|\le(1+L^2)^{1/2}r'\le(1+L^2)^{1/2}r$.
Hence, for fixed $y$, any $x$ with $h(x,y)>0$ satisfies $x\in T^{-1}(B(y,(1+L^2)^{1/2}r))$, proving the inclusion and then the measure bounds by $T_\#\mu=\nu$ and Assumption~\ref{assu:nu}.
Integrating (i) over $\nu$ and inserting the bound on $\mu(X_y)$ yields \eqref{eqn:fiber-global-L2}.
\end{proof}

\begin{proof}[Proof of Theorem~\ref{thm:general-lb}]
Let $r\coloneqq\dist(\spt\pi_\varepsilon;\gr T)$ and $h=\frac{d\pi_\varepsilon}{dP}$. 
Lemma~\ref{lem:fiber-l2} yields $\|h\|_{L^2(P)}^2\ge\frac{1}{C_0}r^{-d}$.
Moreover, $\Delta_\varepsilon\ge\frac{\varepsilon}{2}\|h\|_{L^2(P)}^2$.
Hence,
\begin{equation}\label{eqn:value-gap-basic}
\Delta_\varepsilon\ge\frac{\varepsilon}{2C_0}r^{-d}
\end{equation}
Case 1: $\varepsilon\in(0,1]$. 
The sharp upper bound $\Delta_\varepsilon\le C_\mathrm{val}\varepsilon^{\frac{2}{2+d}}$ for $\varepsilon\in(0,1]$ from \eqref{eqn:value-gap-rate} combines with \eqref{eqn:value-gap-basic} to give
\[
\frac{\varepsilon}{2C_0}r^{-d}\le C_\mathrm{val}\varepsilon^{\frac{2}{2+d}}\Longrightarrow r\ge\Big(\frac{1}{2C_0C_\mathrm{val}}\Big)^{1/d}\varepsilon^{\frac{1}{2+d}}=c_{\mathrm{sm}}\varepsilon^{\frac{1}{2+d}}.
\]
Case 2: $\varepsilon\ge1$. 
Test ($\QOT$) with the reference $P=\mu\otimes\nu$ (which is admissible) to obtain the upper bound
$$
\QOT_\varepsilon(\mu,\nu)\le\int cdP+\frac{\varepsilon}{2},
$$
so that
\[
\Delta_\varepsilon\le\Big(\int cdP-\OT(\mu,\nu)\Big)+\frac{\varepsilon}{2}\le C_\mathrm{lin}\varepsilon,
\]
where the last inequality uses $\varepsilon\ge1$.
Note that $C_{\mathrm{lin}}<\infty$ because $c$ is bounded on $\spt\mu\times\spt\nu\subseteq B(0,1)^2$ and $\OT(\mu,\nu)\ge0$.
Combining this with \eqref{eqn:value-gap-basic} yields
$$
\frac{\varepsilon}{2C_0}r^{-d}\le C_\mathrm{lin}\varepsilon\Longrightarrow r\ge\Big(\frac{1}{2C_0C_\mathrm{lin}}\Big)^{1/d}=c_{\mathrm{lg}}.
$$
This proves \eqref{eqn:gen-lb-all-eps}.
\end{proof}
\subsection{Proof of Section~\ref{sec:L2-bias-gap}}
\begin{proof}[Proof of Lemma~\ref{lem:L2-bias-gap}]
Define the Fenchel-Young slack
$$
D_\varphi(x,y)\coloneqq\varphi(x)+\varphi^*(y)-\langle x,y\rangle\ge0.
$$
Also set
$$
u(x)\coloneqq\frac12\|x\|^2-\varphi(x),v(y)\coloneqq\frac12\|y\|^2-\varphi^*(y).
$$
Since $c(x,y)=\frac12\|x\|^2+\frac12\|y\|^2-\langle x,y\rangle$, we have
$$
c(x,y)-u(x)-v(y)=D_\varphi(x,y).
$$
Therefore, for any $\pi\in\Pi(\mu,\nu)$,
$$
\int cd\pi-\int ud\mu-\int vd\nu=\int (c-u-v)d\pi=\int D_\varphi d\pi.
$$
For the optimal (unregularized) coupling $\pi_\star=(\mathrm{Id},\nabla\varphi)_\#\mu$, Fenchel-Young is an equality along $(x,\nabla\varphi(x))$, hence $\int ud\mu+\int vd\nu=\int cd\pi_\star=\OT(\mu,\nu)$, and we conclude
\begin{equation}\label{eqn:slack-identity}
\int D_\varphi d\pi=\int cd\pi-\OT(\mu,\nu)\ \forall\pi\in\Pi(\mu,\nu).
\end{equation}
Next, we lower bound the slack by the squared bias. 
Since $\nabla\varphi$ is $L$-Lipschitz, the standard smoothness (descent) inequality gives
$$
\varphi(z)\le\varphi(x)+\langle\nabla\varphi(x),z-x\rangle+\frac{L}{2}\|z-x\|^2.
$$
Fix $(x,y)$ and choose $z\coloneqq x+\frac1L\big(y-\nabla\varphi(x)\big)$. 
Then, by the definition of $\varphi^*$,
$$
\varphi^*(y)=\sup_w\{\langle y,w\rangle-\varphi(w)\}\ge\langle y,z\rangle-\varphi(z),
$$
and plugging the smoothness bound at $(x,z)$ yields the pointwise inequality
\begin{equation}\label{eqn:slack-lb}
D_\varphi(x,y)=\varphi(x)+\varphi^*(y)-\langle x,y\rangle\ge\frac{1}{2L}\|y-\nabla\varphi(x)\|^2.
\end{equation}
This is the standard “smoothness $\Rightarrow$ quadratic lower bound on Fenchel-Young slack”.
Integrating \eqref{eqn:slack-lb} against any $\pi\in\Pi(\mu,\nu)$ and using \eqref{eqn:slack-identity} gives
$$
\frac{1}{2L}\int\|y-\nabla\varphi(x)\|^2d\pi\le\int D_\varphi d\pi=\int cd\pi-\OT(\mu,\nu).
$$
Apply this to $\pi=\pi_\varepsilon$ and note $\int cd\pi_\varepsilon-\OT(\mu,\nu)\le\Delta_\varepsilon$ to obtain \eqref{eqn:L2-bias-gap}.
\end{proof}

\begin{lemma}[Optimality of the mean-squared bias scale]\label{lem:mse-lower-bound}
Assume Assumptions~\ref{assu:mu} and \ref{assu:nu} hold.
Let
\[
m_\varepsilon\coloneqq\int\|y-T(x)\|^2\pi_\varepsilon(dx,dy).
\]
Then, for all $\varepsilon\in(0,1]$, $m_\varepsilon\ge c_{\mathrm{mse}}\varepsilon^{\frac{2}{d+2}}$, where
\[
c_{\mathrm{mse}}\coloneqq\frac12\left(\frac{1}{96\overline\lambda_\nu\omega_d C_{\mathrm{val}}}\right)^{2/d}.
\]
\end{lemma}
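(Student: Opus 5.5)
The plan is to repeat the tube-versus-density argument of Lemma~\ref{lem:fiber-l2}, but applied to \emph{mass} rather than to support. The idea is that a small $m_\varepsilon$ forces most of the $\pi_\varepsilon$-mass into a thin tube around $\gr T$. Concentrating mass there requires a large $L^2(P)$ norm of the density $h=d\pi_\varepsilon/dP$, and the value gap forbids a large norm. Note that the target constant $c_{\mathrm{mse}}$ does not involve $L$, and the argument below indeed never uses Lipschitzness of $T$; only $T_\#\mu=\nu$ and Assumption~\ref{assu:nu} enter.

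\emph{Step 1 (upper bound on $\|h\|_{L^2(P)}$).} Since $\int c\,d\pi_\varepsilon\ge\OT(\mu,\nu)$, we have $\Delta_\varepsilon\ge\frac{\varepsilon}{2}\|h\|_{L^2(P)}^2$. Combining with \eqref{eqn:value-gap-rate} gives, for $\varepsilon\in(0,1]$,
\[
\|h\|_{L^2(P)}^2\le 2C_{\mathrm{val}}\,\varepsilon^{-\frac{d}{d+2}}.
\]

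\emph{Step 2 (mass in a tube).} Fix $t>0$ and set $S_t\coloneqq\{(x,y):\|y-T(x)\|\le t\}$. For fixed $y$, the section $\{x:(x,y)\in S_t\}$ equals $T^{-1}(B(y,t))$. Its $\mu$-mass is $\nu(B(y,t))\le\overline\lambda_\nu\omega_d t^d$, using $T_\#\mu=\nu$ and Assumption~\ref{assu:nu}. Cauchy--Schwarz in $x$ gives
\[
\int_{T^{-1}(B(y,t))}h(x,y)\,\mu(dx)\le\Big(\int h(x,y)^2\mu(dx)\Big)^{1/2}(\overline\lambda_\nu\omega_d t^d)^{1/2}.
\]
Integrating in $\nu(dy)$ and applying Cauchy--Schwarz (Jensen) in $y$ then yields
\[
\pi_\varepsilon(S_t)\le\|h\|_{L^2(P)}\,(\overline\lambda_\nu\omega_d t^d)^{1/2}\le\big(2C_{\mathrm{val}}\overline\lambda_\nu\omega_d\,t^d\,\varepsilon^{-\frac{d}{d+2}}\big)^{1/2}.
\]

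\emph{Step 3 (choice of $t$ and Chebyshev-type conclusion).} Choose $t^d=\kappa\,\varepsilon^{d/(d+2)}/(\overline\lambda_\nu\omega_d C_{\mathrm{val}})$ with a small numerical $\kappa$ so that $\pi_\varepsilon(S_t)\le\frac12$. For instance, $\kappa=1/8$ already works; taking $\kappa=1/96$ simply gives slack and reproduces the stated constant. Then
\[
m_\varepsilon\ge t^2\,\pi_\varepsilon(S_t^c)\ge\tfrac12 t^2=\tfrac12\Big(\frac{\kappa}{\overline\lambda_\nu\omega_dC_{\mathrm{val}}}\Big)^{2/d}\varepsilon^{\frac{2}{d+2}},
\]
which is the claimed bound with $c_{\mathrm{mse}}$.

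\emph{Main obstacle.} The only delicate point is Step 2, namely making the fiberwise bound rigorous. Measurability of $S_t$ needs $T$ measurable, and the disintegration identity $\int h(x,y)\mu(dx)=1$ for $\nu$-a.e.\ $y$ follows from the marginal constraint, as in the proof of Lemma~\ref{lem:fiber-l2}. Everything else is bookkeeping of constants.
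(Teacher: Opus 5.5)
Your proof is correct and uses the same mechanism as the paper: fiberwise Cauchy--Schwarz on $T^{-1}(B(y,t))$, the bound $\mu(T^{-1}(B(y,t)))=\nu(B(y,t))\le\overline\lambda_\nu\omega_d t^d$, the inequality $\Delta_\varepsilon\ge\frac{\varepsilon}{2}\|h\|_{L^2(P)}^2$, and the value-gap rate \eqref{eqn:value-gap-rate}. The difference is only in the bookkeeping. The paper sets $t=\sqrt{2m_\varepsilon}$ and uses Markov followed by a level-set step ($D=\{a\ge 1/4\}$, $\nu(D)\ge 1/3$), which needs both $a(y)\le 1$ and a separate proof that $m_\varepsilon>0$. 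You instead fix $t$ in advance and apply Cauchy--Schwarz a second time in $y$, which avoids both of those steps; in particular the disintegration identity you flag as the main obstacle is not actually used, only Tonelli, and you get the better numerical constant $1/8$ in place of $1/96$.
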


\begin{proof}[Proof of Lemma~\ref{lem:mse-lower-bound}]
Let $h=d\pi_\varepsilon/d(\mu\otimes\nu)$ and set $Z(x,y)\coloneqq\|y-T(x)\|^2$.
We first prove that $m_\varepsilon>0$. 
The map $T$ is Borel, so $\gr T$ is a Borel subset of $\R^d\times\R^d$. 
If $m_\varepsilon=0$, then the nonnegative random variable $Z$ vanishes $\pi_\varepsilon$-a.s., and therefore $\pi_\varepsilon(\gr T)=1$.
On the other hand, since $\nu\ll\Leb$, every singleton has zero $\nu$-mass.
Hence
\[
(\mu\otimes\nu)(\gr T)=\int\nu(\{T(x)\})\mu(dx)=0.
\]
This contradicts $\pi_\varepsilon\ll\mu\otimes\nu$. 
Therefore $m_\varepsilon>0$.

Let $t=\sqrt{2m_\varepsilon}$. 
By Markov's inequality applied to $Z$,
\[
\pi_\varepsilon\bigl(\{(x,y):\|y-T(x)\|\le t\}\bigr)=\pi_\varepsilon(\{Z\le 2m_\varepsilon\})\ge\frac12.
\]
For $\nu$-a.e. $y$, define
$$
a(y)\coloneqq
\int_{T^{-1}(B(y,t))}h(x,y)\mu(dx).
$$
Then $0\le a(y)\le 1$ for $\nu$-a.e. $y$, and
$$
\int a(y)\nu(dy)
=
\pi_\varepsilon(\{(x,y):\|y-T(x)\|\le t\})
\ge \frac12.
$$
Let $D=\{y:a(y)\ge 1/4\}$. 
Since $a\le 1$,
$$
\frac12
\le
\int a(y)\nu(dy)
\le
\nu(D)+\frac14(1-\nu(D)),
$$
and therefore $\nu(D)\ge 1/3$.

For every $y\in D$, Cauchy-Schwarz yields
$$
\frac{1}{16}
\le
\left(\int_{T^{-1}(B(y,t))}h(x,y)\mu(dx)\right)^2
\le
\mu(T^{-1}(B(y,t)))
\int_{T^{-1}(B(y,t))}h(x,y)^2\mu(dx).
$$
Since $T_\#\mu=\nu$ and Assumption~\ref{assu:nu} gives $\nu(B(y,t))\le\overline\lambda_\nu\omega_dt^d$, we obtain
\[
\int h(x,y)^2\mu(dx)\ge\frac{1}{16\overline\lambda_\nu\omega_dt^d}\text{ for every }y\in D.
\]
Integrating over $D$ gives
$$
\|h\|_{L^2(\mu\otimes\nu)}^2
\ge
\frac{1}{48\overline\lambda_\nu\omega_dt^d}.
$$
Since
$$
\Delta_\varepsilon
=
\int cd\pi_\varepsilon-\OT(\mu,\nu)
+
\frac{\varepsilon}{2}\|h\|_{L^2(\mu\otimes\nu)}^2
\ge
\frac{\varepsilon}{2}\|h\|_{L^2(\mu\otimes\nu)}^2,
$$
we obtain
$$
\Delta_\varepsilon\ge\frac{\varepsilon}{96\overline\lambda_\nu\omega_dt^d}=\frac{\varepsilon}{96\overline\lambda_\nu\omega_d(2m_\varepsilon)^{d/2}}.
$$
Combining this lower bound with $\Delta_\varepsilon\le C_{\mathrm{val}}\varepsilon^{2/(d+2)}$ gives
$$
m_\varepsilon^{d/2}\ge\frac{1}{96\times2^{d/2}\overline\lambda_\nu\omega_d C_{\mathrm{val}}}\varepsilon^{d/(d+2)}.
$$
Raising both sides to the power $2/d$ gives the claimed constant $c_{\mathrm{mse}}$.
\end{proof}

\begin{proof}[Proof of Theorem~\ref{thm:mse-highprob-bias}]
Combining \eqref{eqn:L2-bias-gap} with \eqref{eqn:value-gap-rate} yields the sharp mean-square scale
$$
\int\|y-T(x)\|^2\pi_\varepsilon(dx,dy)\le 2LC_{\mathrm{val}}\varepsilon^{\frac{2}{d+2}}.
$$
when $\varepsilon\in (0,1]$.
Consequently, for every $t>0$, Markov's inequality gives a high-probability pointwise bias bound:
$$
\begin{aligned}
\pi_\varepsilon\Bigl(\{(x,y):\|y-T(x)\|\ge t\}\Bigr)&\le\frac{1}{t^2}\int\|y-T(x)\|^2d\pi_\varepsilon\\
&\le\frac{2LC_{\mathrm{val}}\varepsilon^{\frac{2}{d+2}}}{t^2}.
\end{aligned}
$$
\end{proof}
\subsection{Proof of Section~\ref{sec:affine-known-monge}}
First, we prove the lemma below.
\begin{lemma}\label{lem:var-change}
Let $\mathcal F:\R^d\times\R^d\to\R^d\times\R^d$ be measurable and let $\widetilde\pi$ be a measure on $\R^d\times\R^d$.
Define $\pi\coloneqq \mathcal F_\#\widetilde\pi$ (pushforward), i.e. $\pi(B)=\widetilde\pi(\mathcal F^{-1}(B))$ for all Borel sets $B$.
Then for every Borel measurable $\psi:\R^d\times\R^d\to[0,\infty]$ (or integrable $\psi$), the standard change-of-variables formula for pushforwards gives
\begin{equation}
\int\psi(z)\pi(dz)=\int\psi(\mathcal F(w))\widetilde\pi(dw).
\end{equation}
\end{lemma}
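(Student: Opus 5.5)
This is the standard change-of-variables identity for pushforward measures, and I would prove it by the usual measure-theoretic induction on $\psi$.

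\emph{Indicators.} For $\psi=\mathbf 1_B$ with $B$ Borel, the left side is $\pi(B)$. The right side is
$$
\int\mathbf 1_B(\mathcal F(w))\,\widetilde\pi(dw)=\int\mathbf 1_{\mathcal F^{-1}(B)}(w)\,\widetilde\pi(dw)=\widetilde\pi(\mathcal F^{-1}(B)).
$$
These agree by the definition $\pi=\mathcal F_\#\widetilde\pi$. Measurability of $\mathcal F$ ensures $\mathcal F^{-1}(B)$ is Borel, so $\psi\circ\mathcal F$ is measurable.

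\emph{Nonnegative functions.} Linearity of both sides extends the identity to nonnegative simple functions $\sum_k\alpha_k\mathbf 1_{B_k}$. For a general Borel $\psi\ge0$ (possibly taking the value $+\infty$), choose simple functions $0\le\psi_n\uparrow\psi$ pointwise. Then $\psi_n\circ\mathcal F\uparrow\psi\circ\mathcal F$ pointwise as well. The monotone convergence theorem, applied on each side, passes the identity to the limit, with both sides possibly infinite.

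\emph{Integrable functions.} For integrable $\psi$, split $\psi=\psi^+-\psi^-$. The nonnegative case gives
$$
\int\psi^\pm\,d\pi=\int\psi^\pm\circ\mathcal F\,d\widetilde\pi<\infty.
$$
Hence $\psi\circ\mathcal F$ is $\widetilde\pi$-integrable, and subtracting the two identities yields the claim.

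There is no real obstacle here. The only point needing care is the monotone convergence step with extended-valued functions, which is handled by the standard dyadic simple-function approximation.
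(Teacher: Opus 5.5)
Your proof is correct and follows the paper's argument essentially step for step: indicators, then nonnegative simple functions by linearity, then general nonnegative $\psi$ via dyadic simple approximations and monotone convergence, and finally integrable $\psi$ via the split $\psi=\psi^+-\psi^-$. The paper additionally records the converse ($\psi\circ\mathcal F\in L^1(\widetilde\pi)$ implies $\psi\in L^1(\pi)$) by applying the nonnegative case to $|\psi|$. The statement does not need this converse.
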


\begin{proof}[Proof of Lemma~\ref{lem:var-change}]
Let $(W,\mathcal A)$ and $(Z,\mathcal B)$ be measurable spaces, let $\mathcal F:W\to Z$ be measurable, and let $\widetilde\pi$ be a measure on $(W,\mathcal A)$.
Define the pushforward (image measure) $\pi\coloneqq \mathcal F_\#\widetilde\pi$ on $(Z,\mathcal B)$ by
$$
\pi(B)\coloneqq\widetilde\pi(\mathcal F^{-1}(B))\ \forall B\in\mathcal B.
$$
We prove that for every $\mathcal B$-measurable $\psi:Z\to[0,\infty]$ one has
\begin{equation}\label{eqn:pushforward-cov-proof}
\int_Z\psi(z)\pi(dz)=\int_W\psi(\mathcal F(w))\widetilde\pi(dw).
\end{equation}
(If $\psi$ is integrable, the same identity follows by applying the result to $\psi^+$ and $\psi^-$.)

\textbf{Step 1: indicator functions.}
Let $\psi=\mathbf1_B$ for some $B\in\mathcal B$. Then, by definition of the integral of an indicator and of pushforward,
\begin{align*}
\int_Z\mathbf1_B(z)\pi(dz)&=\pi(B)=\widetilde\pi(\mathcal F^{-1}(B))=\int_W\mathbf1_{\mathcal F^{-1}(B)}(w)\widetilde\pi(dw)\\
&=\int_W\mathbf1_B(\mathcal F(w))\widetilde\pi(dw)=\int_W\psi(\mathcal F(w))\widetilde\pi(dw).
\end{align*}

\textbf{Step 2: nonnegative simple functions.}
Let $\psi$ be simple: $\psi=\sum_{k=1}^na_k\mathbf1_{B_k}$ with $a_k\ge 0$ and $B_k\in\mathcal B$.
Using linearity of the integral and Step 1,
\begin{align*}
\int_Z\psi(z)\pi(dz)&=\sum_{k=1}^na_k\int_Z\mathbf1_{B_k}(z)\pi(dz)=\sum_{k=1}^na_k\int_W\mathbf1_{B_k}(\mathcal F(w))\widetilde\pi(dw)\\
&=\int_W\Big(\sum_{k=1}^na_k\mathbf1_{B_k}(\mathcal F(w))\Big)\widetilde\pi(dw)=\int_W\psi(\mathcal F(w))\widetilde\pi(dw).
\end{align*}

\textbf{Step 3: general nonnegative measurable functions.}
Let $\psi:Z\to[0,\infty]$ be $\mathcal B$-measurable.
For each $m\in\mathbb N$, define the Borel (simple) function $\phi_m:[0,\infty]\to[0,\infty)$ by
$$
\phi_m(t)\coloneqq
\begin{cases}
2^{-m}\lfloor2^mt\rfloor,&0\le t<m,\\
m,&t\in[m,\infty],
\end{cases}
$$
and set $\psi_m\coloneqq\phi_m\circ\psi$.
Then each $\psi_m$ is a nonnegative simple $\mathcal B$-measurable function, and
$$
0\le\psi_1\le\psi_2\le\cdots\le\psi\text{ and }\psi_m(z)\uparrow \psi(z)\ \forall z\in Z.
$$
Applying Step~2 to each $\psi_m$ yields
$$
\int_Z\psi_md\pi=\int_W(\psi_m\circ\mathcal F)d\widetilde\pi\ \forall m.
$$
Since $\psi_m\uparrow\psi$ implies $\psi_m\circ\mathcal F\uparrow \psi\circ\mathcal F$, the monotone convergence theorem gives
$$
\int_Z\psi d\pi=\lim_{m\to\infty}\int_Z\psi_md\pi=\lim_{m\to\infty}\int_W(\psi_m\circ\mathcal F)d\widetilde\pi=\int_W(\psi\circ\mathcal F)d\widetilde\pi,
$$
which proves \eqref{eqn:pushforward-cov-proof} for all $\psi\ge 0$.

\textbf{Step 4: integrable (signed) functions.}
If $\psi\in L^1(\pi)$ is real-valued, write $\psi=\psi^+-\psi^-$ with $\psi^\pm\ge 0$ and $\int \psi^\pm d\pi<\infty$.
Applying Step~3 to $\psi^\pm$ and subtracting gives
$$
\int_Z\psi d\pi=\int_W(\psi\circ\mathcal F)d\widetilde\pi.
$$
Moreover, applying Step~3 to $|\psi|=\psi^++\psi^-$ yields
$$
\int_Z |\psi|d\pi=\int_W (|\psi|\circ\mathcal F)d\widetilde\pi.
$$
Hence $\psi\in L^1(\pi)$ if and only if $\psi\circ\mathcal F\in L^1(\widetilde\pi)$, and in that case the integrals coincide.
\end{proof}

\begin{proof}[Proof of Theorem~\ref{thm:affine-sharp-pointwise}]
\textbf{Step 1: Exact reduction to a self-transport QOT problem.}
Define the (invertible) map
$$
\mathcal{F}:\R^d\times\R^d\to\R^d\times\R^d,\mathcal{F}(x,v)\coloneqq(x,T(v)).
$$
Then $\mathcal{F}_\#(\mu\otimes\mu)=\mu\otimes\nu$. 
Hence, for any coupling $\widetilde\pi\in\Pi(\mu,\mu),\pi\coloneqq\mathcal{F}_\#\widetilde\pi\in\Pi(\mu,\nu)$ and:
$$
\Big\|\frac{d\pi}{d(\mu\otimes\nu)}\Big\|_{L^2(\mu\otimes\nu)}=\Big\|\frac{d\widetilde\pi}{d(\mu\otimes\mu)}\Big\|_{L^2(\mu\otimes\mu)}.
$$
Indeed, if $Q=\mathcal{F}_\#P$ and $\eta\ll P$, then $d(\mathcal{F}_\#\eta)/dQ=(d\eta/dP)\circ\mathcal{F}^{-1}$ and the $L^2$ norm is invariant under pushforward.

Because $T$ is affine and a gradient map, $\varphi$ is quadratic; more precisely, up to an additive constant,
\[
\varphi(x)=\frac12\langle x,Ax\rangle+\langle a,x\rangle,\varphi^*(y)=\frac12\langle y-a,A^{-1}(y-a)\rangle.
\]
Hence, the Fenchel-Young slack is explicit:
$$
D_\varphi(x,T(v))=\frac12\langle v-x,A(v-x)\rangle.
$$
By the standard slack identity (cf.\ Lemma~\ref{lem:L2-bias-gap} in this draft),
$$
\int cd\pi-\OT(\mu,\nu)=\int D_\varphi d\pi.
$$
In particular, using Lemma~\ref{lem:var-change} with $\psi=D_\varphi$ and $\mathcal F(x,v)=(x,T(v))$, we obtain
\[
\begin{aligned}
\int D_\varphi d\pi&=\int D_\varphi(\mathcal F(x,v))\widetilde\pi(dx,dv)\\
&=\int D_\varphi(x,T(v))\widetilde\pi(dx,dv).
\end{aligned}
\]
Therefore, for $\pi=\mathcal{F}_\#\widetilde\pi$ we obtain
\[
\begin{aligned}
\int cd\pi-\OT(\mu,\nu)=&\int D_\varphi(x,T(v))\widetilde\pi(dx,dv)\\
=&\ \frac12\int\langle v-x,A(v-x)\rangle\widetilde\pi(dx,dv).
\end{aligned}
\]
Consequently, minimizing the original QOT objective over $\Pi(\mu,\nu)$ is equivalent (up to the additive constant $\OT(\mu,\nu)$) to minimizing over $\Pi(\mu,\mu)$ the self-transport functional
\begin{equation}\label{eqn:self-transport-A}
\widetilde\pi\mapsto\frac12\int\langle v-x,A(v-x)\rangle d\widetilde\pi(x,v)+\frac{\varepsilon}{2}\Big\|\frac{d\widetilde\pi}{d(\mu\otimes\mu)}\Big\|_{L^2(\mu\otimes\mu)}^2.
\end{equation}
In particular, if $\pi_\varepsilon$ is the QOT optimizer for $(\mu,\nu)$, then
$$
\widetilde\pi_\varepsilon\coloneqq(\mathrm{Id},T^{-1})_\#\pi_\varepsilon
$$
is the unique minimizer of \eqref{eqn:self-transport-A} over $\Pi(\mu,\mu)$.

\textbf{Step 2: Sharp pointwise bias $\varepsilon^{1/(d+2)}$.}
Define the linear change of variables
$$
\begin{aligned}
u&\coloneqq A^{1/2}x,w\coloneqq A^{1/2}v,\\
\mu_A&\coloneqq(A^{1/2})_\#\mu,\widehat\pi_\varepsilon\coloneqq(A^{1/2}\times A^{1/2})_\#\widetilde\pi_\varepsilon.
\end{aligned}
$$
Then $\widehat\pi_\varepsilon\in\Pi(\mu_A,\mu_A)$ and \eqref{eqn:self-transport-A} becomes the standard self-transport QOT problem
$$
\widehat\pi\mapsto\int\frac12\|u-w\|^2d\widehat\pi(u,w)+\frac{\varepsilon}{2}\Big\|\frac{d\widehat\pi}{d(\mu_A\otimes\mu_A)}\Big\|_{L^2(\mu_A\otimes\mu_A)}^2.
$$
Moreover, for every $(x,y)\in\spt\pi_\varepsilon$, letting $v=T^{-1}(y)$ gives $y-T(x)=T(v)-T(x)=A(v-x)=A^{1/2}(w-u)$, so
$$
\|y-T(x)\|\le\|A^{1/2}\|_{\mathrm{op}}\|w-u\|.
$$
Thus, a pointwise tube bound for $\widehat\pi_\varepsilon$ around the diagonal immediately yields a pointwise tube bound for $\pi_\varepsilon$ around $\gr T$.

We now invoke the sharp self-transport sparsity/tube theorem of Wiesel-Xu: in the self-transport case, the support satisfies
$$
\sup_{(u,w)\in\spt\widehat\pi_\varepsilon}\|u-w\|\le C\bigl(\delta_{\mathrm{ST}}^{\mu_A}(\varepsilon)^{1/2}\wedge\diam(\spt\mu_A)\bigr),
$$
where (for a measure $\eta$)
$$
\begin{aligned}
\delta_{\mathrm{ST}}^\eta(\varepsilon)&\coloneqq\inf\Bigl\{r>0:r\rho_\eta(\sqrt r)>\varepsilon\Bigr\},\\
\rho_\eta(s)&\coloneqq\inf_{z\in\spt\eta}\eta(B(z,s)).
\end{aligned}
$$
See \citealp[Theorem~3.1,~3.4]{wiesel2025sparsity}.
In particular, for all sufficiently small $\varepsilon$, the first term dominates, and we get
\begin{equation}\label{eqn:WX-self-transport-tube}
\sup_{(u,w)\in\spt\widehat\pi_\varepsilon}\|u-w\|\le C\delta_{\mathrm{ST}}^{\mu_A}(\varepsilon)^{1/2},
\end{equation}
In \citealp[Lemma~3.5]{wiesel2025sparsity}, Wiesel and Xu give $\|u-w\|^2\le4M$ on $\spt\pi_\varepsilon$ with $M\coloneqq\sup_{u\in\spt\mu_A}f_\varepsilon(u)$, and their proof of Theorem~3.1 shows $M\le 16\delta_{\mathrm{ST}}(\varepsilon)$, hence $\|u-w\|\le 2\sqrt M\le 8\delta_{\mathrm{ST}}(\varepsilon)^{1/2}$.
So, from \eqref{eqn:WX-self-transport-tube} we have:
\[
\sup_{(u,w)\in\spt\widehat\pi_\varepsilon}\|u-w\|\le 8\delta_{\mathrm{ST}}^{\mu_A}(\varepsilon)^{1/2}. 
\]
By the density lower bound $\underline\lambda_{\mu_A}$ on $\Omega_A$ and the definition of $\kappa_A$, for all $s\le r_A$ we have
$$
\rho_{\mu_A}(s)\ge\underline\lambda_{\mu_A}\Leb(\Omega_A\cap B(u,s))\ge\underline\lambda_{\mu_A}\kappa_A\omega_ds^d.
$$
Hence for $r\le r_A^2$,
$$
r\rho_{\mu_A}(\sqrt r)\ge\underline\lambda_{\mu_A}\kappa_A\omega_dr^{1+d/2}.
$$
If $\varepsilon\le\varepsilon_0=\underline\lambda_{\mu_A}\kappa_A\omega_dr_A^{d+2}$, then choosing
$$
r_*\coloneqq\left(\frac{\varepsilon}{\underline\lambda_{\mu_A}\kappa_A\omega_d}\right)^{\frac{2}{d+2}}\le r_A^2
$$
gives $r_*\rho_{\mu_A}(\sqrt{r_*})\ge\varepsilon$, hence $\delta_{\mathrm{ST}}^{\mu_A}(\varepsilon)\le r_*$ and therefore
$$
\delta_{\mathrm{ST}}^{\mu_A}(\varepsilon)^{1/2}\le\left(\frac{\varepsilon}{\underline\lambda_{\mu_A}\kappa_A\omega_d}\right)^{\frac{1}{d+2}}.
$$
Finally, we have 
$$
\|y-T(x)\|\le\|A^{1/2}\|_{\mathrm{op}}\|w-u\|=\sqrt{\lambda_{\max}(A)}\|w-u\|.
$$
Combining with the bounds above yields \eqref{eqn:affine-sharp-pointwise-bias-explicit}.
\end{proof}
\begin{proof}[Proof of Proposition \ref{prop:trunc-gauss-affine}]
Write the (unnormalized) Gaussian densities
$$
\phi_{m,\Sigma}(x)\coloneqq(2\pi)^{-d/2}\det(\Sigma)^{-1/2}\exp\Big(-\frac12(x-m)^\top\Sigma^{-1}(x-m)\Big).
$$
Then $\mu$ has density $f_\mu(x)=Z_0^{-1}\phi_{m_0,\Sigma_0}(x)\mathbf1_{\Omega_0}(x)$ and $\nu$ has density $f_\nu(y)=Z_1^{-1}\phi_{m_1,\Sigma_1}(y)\mathbf1_{\Omega_1}(y)$.

Assume \eqref{eqn:affine-brenier-trunc} and $T_\#\mu=\nu$.

Since $A$ is invertible, the standard change-of-variables formula yields for every $y\in\spt\nu$,
\begin{equation}\label{eqn:pushforward-density}
\begin{aligned}
f_\nu(y)&=\frac{1}{\det A}f_\mu\big(A^{-1}(y-a)\big)\\
&=\frac{1}{\det A}\frac{1}{Z_0}\phi_{m_0,\Sigma_0}\big(A^{-1}(y-a)\big)\mathbf1_{\Omega_0}\big(A^{-1}(y-a)\big).
\end{aligned}
\end{equation}
Because $\phi_{m_0,\Sigma_0}>0$ everywhere, the right-hand side is positive precisely on the set $A\Omega_0+a$ up to Lebesgue-null sets, and vanishes outside it. 
Since $f_\nu>0$ on $\Omega_1$ and $f_\nu=0$ on $\R^d\setminus\Omega_1$, we must have
\[
\Omega_1=A\Omega_0+a\text{ up to Lebesgue-null sets.}
\]
On $\Omega_1$ (an open set), the indicators in \eqref{eqn:pushforward-density} are equal to $1$, so
\begin{equation}\label{eqn:density-equality-on-open}
\frac{1}{Z_1}\phi_{m_1,\Sigma_1}(y)=\frac{1}{\det A}\frac{1}{Z_0}\phi_{m_0,\Sigma_0}\big(A^{-1}(y-a)\big),y\in\Omega_1.
\end{equation}

Taking logarithms and multiplying by $-2$, we obtain that there exists a constant $C\in\R$ such that for $y\in\Omega_1$:
\begin{equation}\label{eqn:quadratic-identity}
\begin{aligned}
&(y-m_1)^\top\Sigma_1^{-1}(y-m_1)\\
=&\big(A^{-1}(y-a)-m_0\big)^\top\Sigma_0^{-1}\big(A^{-1}(y-a)-m_0\big)+C,
\end{aligned}
\end{equation}
Both sides of \eqref{eqn:quadratic-identity} are quadratic polynomials in $y$; since $\Omega_1$ has nonempty interior, they agree as polynomials on $\R^d$. 
Writing
$$
A^{-1}(y-a)-m_0=A^{-1}\big(y-(Am_0+a)\big),
$$
we may rewrite the right-hand side of \eqref{eqn:quadratic-identity} as
$$
\big(y-(Am_0+a)\big)^\top\big(A^{-\top}\Sigma_0^{-1}A^{-1}\big)\big(y-(Am_0+a)\big)+C.
$$
Equality of the quadratic coefficients gives
$$
\Sigma_1^{-1}=A^{-\top}\Sigma_0^{-1}A^{-1}\Longleftrightarrow\Sigma_1=A\Sigma_0A,
$$
and equality of the linear terms then forces $m_1=Am_0+a$. 
This proves the necessity of \eqref{eqn:compatibility-conditions}.

Conversely, assume \eqref{eqn:compatibility-conditions}. 
Then the density identity \eqref{eqn:density-equality-on-open} holds (with the correct normalizing constants), hence $T_\#\mu=\nu$.
Since $A=A^\top\succ0$, the map $T(x)=Ax+a$ is the gradient of the convex function $\frac12 x^\top Ax+a^\top x$, and therefore it is the Monge map.
\end{proof}
\section{Experiment details}
\subsection{Deterministic solvers}\label{sec:solvers}
\paragraph{Non-linear Gauss-Seidel algorithm on \eqref{eqn:exp-discrete-marginal-system}.}
\begin{algorithm}[ht]
\caption{Non-linear Gauss-Seidel (\citep[Alg.~1]{lorenz2021quadratically})}\label{alg:nlgs-qot}
\begin{algorithmic}[1]
\STATE \textbf{Input:} weights $a\in\R_+^N$, $b\in\R_+^M$ with $\sum_ia_i=\sum_jb_j=1$; cost matrix $C=(c_{ij})$; regularization parameter $\varepsilon>0$; tolerance $\mathrm{tol}>0$; max iters $K$.
\STATE Initialize $g^{(0)}\in\R^M$ (e.g.\ $g^{(0)}=0$), set $k\gets0$.
\REPEAT
\FOR{$i=1$ \textbf{to} $N$}
\STATE Form $y^{(i)}_j\gets c_{ij}-g_j^{(k)}$ for $j=1,\dots,M$.
\STATE Update $f_i^{(k+1)}\gets \textsc{SolveWeightedHinge}(y^{(i)},b,\varepsilon)$ using Algorithm~\ref{alg:solve-weighted-hinge}.
\ENDFOR
\FOR{$j=1$ \textbf{to} $M$}
\STATE Form $z^{(j)}_i\gets c_{ij}-f_i^{(k+1)}$ for $i=1,\dots,N$.
\STATE Update $g_j^{(k+1)}\gets \textsc{SolveWeightedHinge}(z^{(j)},a,\varepsilon)$ using Algorithm~\ref{alg:solve-weighted-hinge}.
\ENDFOR
\STATE (Gauge-fix) $\kappa\gets\sum_{i=1}^N a_i f_i^{(k+1)}$;
\ $f^{(k+1)}\gets f^{(k+1)}-\kappa\mathbf1_N$, $g^{(k+1)}\gets g^{(k+1)}+\kappa\mathbf1_M$.
\STATE Compute residuals $r,s$ by \eqref{eqn:exp-residual}.
\STATE $k\gets k+1$.
\UNTIL{$\max_i|r_i|\vee\max_j|s_j|\le\mathrm{tol}$ \textbf{or} $k=K$}
\STATE Recover $\pi$ via \eqref{eqn:exp-discrete-kkt}.
\STATE \textbf{Output:} $(f^{(k)},g^{(k)},\pi)$.
\end{algorithmic}
\end{algorithm}
Fix $g$ and update $f$ by solving for each $i$
\[
\sum_{j=1}^Mb_j[f_i+g_j-c_{ij}]_+=\varepsilon.
\]
Let $y_{ij}\coloneqq c_{ij}-g_j$; then this becomes the scalar monotone equation
\begin{equation}\label{eqn:exp-scalar-eq}
\sum_{j=1}^Mb_j(f_i-y_{ij})_+=\varepsilon.
\end{equation}
If $y_{i(1)}\le\cdots\le y_{i(M)}$ denotes the sorted list and $b_{(1)},\dots,b_{(M)}$ are the corresponding permuted weights, define prefix sums
$$
B_k\coloneqq\sum_{\ell=1}^k b_{(\ell)},S_k\coloneqq\sum_{\ell=1}^k b_{(\ell)}y_{i(\ell)}.
$$
On the interval $f_i\in[y_{i(k)},y_{i(k+1)}]$, the left-hand side of \eqref{eqn:exp-scalar-eq} equals $B_kf_i-S_k$, hence the candidate solution is
\[
f_i^{(k)}=\frac{\varepsilon+S_k}{B_k}.
\]
The unique solution is obtained by finding $k$ such that $f_i^{(k)}\in[y_{i(k)},y_{i(k+1)}]$ (with the convention $y_{i(M+1)}=+\infty$).
The $g$-update is analogous: fixing $f$, solve for each $j$ the scalar equation $\sum_{i=1}^Na_i(g_j-(c_{ij}-f_i))_+=\varepsilon$.
\begin{algorithm}[ht]
\caption{\textsc{SolveWeightedHinge}$(y,w,\varepsilon)$}\label{alg:solve-weighted-hinge}
\begin{algorithmic}[1]
\STATE \textbf{Input:} values $(y_j)_{j=1}^m\subset\R$, weights $(w_j)_{j=1}^m\subset(0,\infty)$, rhs $\varepsilon>0$.
\STATE Sort $y_{(1)}\le\cdots\le y_{(m)}$ with permuted weights $w_{(1)},\dots,w_{(m)}$; set $y_{(m+1)}\gets+\infty$
\STATE Initialize $B\gets 0$, $S\gets 0$
\FOR{$k=1$ \textbf{to} $m$}
\STATE $B\gets B+w_{(k)}$,  $S\gets S+w_{(k)}y_{(k)}$
\STATE $x \gets (\varepsilon+S)/B$ \COMMENT{candidate root on $[y_{(k)},y_{(k+1)}]$}
\IF{$y_{(k)} \le x \le y_{(k+1)}$}
\STATE \textbf{return} $x$
\ENDIF
\ENDFOR
\STATE \textbf{Output:} $x^\star\in\R$ solving $\sum_{j=1}^m w_j(x-y_j)_+=\varepsilon$
\end{algorithmic}
\end{algorithm}
\paragraph{Semismooth Newton / quasi-Newton on the KKT system.}
\begin{algorithm}[ht]
\caption{Globalized, regularized semismooth Newton for discrete QOT (cf.\ \citep[Alg.~2]{lorenz2021quadratically})}\label{alg:ssn-qot}
\begin{algorithmic}[1]
\STATE \textbf{Input:} $a\in\R_+^N$, $b\in\R_+^M$; $C=(c_{ij})$; $\varepsilon>0$; Newton regularization $\lambda>0$;
Armijo parameters $\theta,\xi\in(0,1)$; tolerance $\mathrm{tol}>0$; max iters $K$.
\STATE Initialize $(f^{(0)},g^{(0)})$ (e.g.\ output of Algorithm~\ref{alg:nlgs-qot}); set $k\gets0$.
\REPEAT
\STATE Compute slacks $P_{ij}\gets f_i^{(k)}+g_j^{(k)}-c_{ij}$ and $Q_{ij}\gets[P_{ij}]_+$.
\STATE Active set $\sigma_{ij}\gets\mathbf1_{\{P_{ij}>0\}}$.
\STATE Form a Newton derivative $G$ as in \eqref{eqn:exp-ssn-jac}.
\STATE Solve the regularized linear system
$$
(G+\lambda I)\begin{bmatrix}\Delta f\\ \Delta g\end{bmatrix}=-\begin{bmatrix}r\\ s\end{bmatrix}.
$$
\STATE Define the convex dual-minimization functional
$$
\Phi_\varepsilon(f,g)\coloneqq\frac{1}{2\varepsilon}\sum_{i=1}^N\sum_{j=1}^M a_ib_j[f_i+g_j-c_{ij}]_+^2-\sum_{i=1}^N a_if_i-\sum_{j=1}^Mb_j g_j,
$$
(which satisfies $\max$ dual \eqref{eqn:exp-discrete-dual} $=-\min\Phi_\varepsilon$).
\STATE Set $t\gets1$ and compute the directional derivative
$$
d\gets\sum_{i,j}\pi_{ij}(\Delta f_i+\Delta g_j)-\sum_ia_i\Delta f_i-\sum_jb_j\Delta g_j,\pi_{ij}\coloneqq\frac{a_ib_j}{\varepsilon}[f_i^{(k)}+g_j^{(k)}-c_{ij}]_+.
$$
\WHILE{$\Phi_\varepsilon(f^{(k)}+t\Delta f,g^{(k)}+t\Delta g)>\Phi_\varepsilon(f^{(k)},g^{(k)})+\theta t d$}
\STATE $t\gets\xi t$.
\ENDWHILE
\STATE Update $(f^{(k+1)},g^{(k+1)})\gets(f^{(k)},g^{(k)})+t(\Delta f,\Delta g)$.
\STATE (Gauge-fix) $\kappa\gets\sum_{i=1}^N a_i f_i^{(k+1)}$;$f^{(k+1)}\gets f^{(k+1)}-\kappa\mathbf1_N$, $g^{(k+1)}\gets g^{(k+1)}+\kappa\mathbf1_M$.
\STATE Compute residuals $r,s$ by \eqref{eqn:exp-residual}.
\STATE $k\gets k+1$.
\UNTIL{$\max_i|r_i|\vee\max_j|s_j|\le\mathrm{tol}$ \textbf{or} $k=K$}
\STATE Recover $\pi$ via \eqref{eqn:exp-discrete-kkt}.
\STATE \textbf{Output:} $(f^{(k)},g^{(k)},\pi)$.
\end{algorithmic}
\end{algorithm}
Define the residual vector $F(f,g)=(r,s)$ by \eqref{eqn:exp-residual}.
Introduce the activity matrix
$$
\sigma_{ij}\coloneqq\mathbf1_{\{f_i+g_j>c_{ij}\}}.
$$
A Newton derivative of $F$ at $(f,g)$ is the block matrix
\begin{equation}\label{eqn:exp-ssn-jac}
G=
\begin{bmatrix}
\mathrm{diag}\Big(\sum_{j=1}^Mb_j\sigma_{ij}\Big)_{i=1}^N&\big(b_j\sigma_{ij}\big)_{i,j}\\
\big(a_i\sigma_{ij}\big)_{j,i}&\mathrm{diag}\Big(\sum_{i=1}^Na_i\sigma_{ij}\Big)_{j=1}^M
\end{bmatrix}.
\end{equation}
Because the potentials are shift-invariant,
$$
(f,g)\mapsto(f+\kappa\mathbf1_N,g-\kappa\mathbf1_M)\Rightarrow[f_i+g_j-c_{ij}]_+\ \text{unchanged},
$$
the residual map $F$ is constant along $(\mathbf1_N,-\mathbf1_M)$, and any Newton matrix has a nontrivial kernel (at least one-dimensional). 
Two standard remedies are: (i) fix a gauge, e.g.\ impose $\sum_{i=1}^N a_i f_i=0$ (or set one coordinate to zero), so the reduced Newton matrix is invertible; 
or (ii) use a regularized step $(G+\lambda I)\Delta=-F$ with $\lambda>0$.

\paragraph{Stopping criterion.}
Given $(f,g)$, define the residuals of \eqref{eqn:exp-discrete-marginal-system}
\begin{equation}\label{eqn:exp-residual}
r_i\coloneqq\sum_{j=1}^Mb_j[f_i+g_j-c_{ij}]_+-\varepsilon,
s_j\coloneqq\sum_{i=1}^Na_i[f_i+g_j-c_{ij}]_+-\varepsilon,
\end{equation}
and stop when $\max_i|r_i|\vee\max_j|s_j|\le\mathrm{tol}$, then form $\pi$ via \eqref{eqn:exp-discrete-kkt}.
\subsection{Details of synthetic data}\label{sec:exp-synthetic-data}
\paragraph{Compactly supported test families.}
Choose a bounded Lipschitz domain $\Omega_0\subset\R^d$ with nonempty interior and define
$$
\Omega_1\coloneqq A\Omega_0+a.
$$
We use the compatible truncated Gaussian family: Fix $m_0\in\R^d$ and $\Sigma_0\in\mathbb S_{++}^d$, and define
$$
m_1\coloneqq Am_0+a,\Sigma_1\coloneqq A\Sigma_0A.
$$
Let $Z_0,Z_1>0$ be normalizing constants and set
$$
\begin{aligned}
d\mu(x)=&\frac{1}{Z_0}\exp\Big(-\frac12(x-m_0)^\top\Sigma_0^{-1}(x-m_0)\Big)\nonumber\\
&\cdot\mathbf1_{\Omega_0}(x)dx,\\
d\nu(y)=&\frac{1}{Z_1}\exp\Big(-\frac12(y-m_1)^\top\Sigma_1^{-1}(y-m_1)\Big)\nonumber\\
&\cdot\mathbf1_{\Omega_1}(y)dy.
\end{aligned}
$$
By Proposition~\ref{prop:trunc-gauss-affine}, $\nu=T_\#\mu$ and $T$ is the Monge map.
In this family, $T=\nabla\varphi$ is affine and $\nu=T_\#\mu$ by construction, hence $T$ is the Monge map.
\paragraph{Finite-sample discretization.}
We generate empirical measures as follows. Sample $x_1,\dots,x_N\stackrel{\mathrm{iid}}{\sim}\mu$.
To form $\nu_M$, we use a partially paired pushforward: a fraction $p_{\mathrm{pair}}\in(0,1]$ of the $y_j$ are constructed as $y_j=T(x_j)$ (shared samples), and the remaining $1-p_{\mathrm{pair}}$ are constructed as $y_j=T(\tilde x_j)$ for independent $\tilde x_j\sim\mu$.
\paragraph{Details of the synthetic instance family.}
We use a diagonally structured $A$ with controlled conditioning: $A=\mathrm{diag}(\mathrm{base},\mathrm{base}^2,\dots,\mathrm{base}^d)$ with $\mathrm{base}=1.00005$ and $a=0$.
The truncation radius is set to $r_{\mathrm{trunc}}=0.8/d^{0.5}$, then we scale to ensure that $\mu$ and $\nu$'s supports are in $B(0,1)$.
For the truncated Gaussian, we use a weakly correlated covariance $\Sigma_{ii}=(\frac1d-\frac{45}{d^2})r_{\mathrm{trunc}}^2$ and $\Sigma_{ij}=\frac{45}{d^2}r_{\mathrm{trunc}}^2$ for $i\neq j$.
The paired fraction is $p_{\mathrm{pair}}=\min\{0.1,0.1\cdot(200/d)^2\}$.
\subsection{Discretization and the Discrete QOT Problem}\label{sec:exp-discrete-qot}
Let $\mu_N=\sum_{i=1}^Na_i\delta_{x_i}$ and $\nu_M=\sum_{j=1}^Mb_j\delta_{y_j}$ be discrete approximations of $\mu$ and $\nu$.
In the affine pushforward setting $\nu=T_\#\mu$, it is convenient to sample $\nu$ by pushforward:
\[
x_i\stackrel{\mathrm{iid}}{\sim}\mu,\tilde x_j\stackrel{\mathrm{iid}}{\sim}\mu,y_j\coloneqq T(\tilde x_j)\sim \nu,a_i=\frac1N,b_j=\frac1M.
\]
(One may also sample $y_j\sim\nu$ directly; this formula guarantees the exact affine relation at the population level.)
\paragraph{Dual/KKT form (support-revealing).}
Define the discrete quadratic cost matrix
$$
c_{ij}\coloneqq c(x_i,y_j)=\frac12\|x_i-y_j\|^2, i=1,\dots,N,j=1,\dots,M.
$$
The discrete quadratically regularized OT problem with reference weights $P_{ij}=a_ib_j$ is
\begin{equation}\label{eqn:exp-discrete-primal-qot}
\QOT_\varepsilon(\mu_N,\nu_M)=\min_{\pi\in\Pi(a,b)}\Big\{\sum_{i=1}^N\sum_{j=1}^M c_{ij}\pi_{ij}+\frac{\varepsilon}{2}\sum_{i=1}^N\sum_{j=1}^M\frac{\pi_{ij}^2}{a_i b_j}\Big\},
\end{equation}
where
$$
\Pi(a,b)\coloneqq\Bigl\{\pi\in\R_+^{N\times M}:\sum_{j=1}^M\pi_{ij}=a_i\ \forall i,\sum_{i=1}^N\pi_{ij}=b_j\ \forall j\Bigr\}.
$$
The discrete analogue of \eqref{eqn:optimal_density}-\eqref{eqn:dual1}-\eqref{eqn:dual2} is: there exist dual vectors $(f_i)_{i\le N}$ and $(g_j)_{j\le M}$ such that
\begin{equation}\label{eqn:exp-discrete-kkt}
\pi_{ij}=\frac{a_ib_j}{\varepsilon}\bigl[f_i+g_j-c_{ij}\bigr]_+,
\end{equation}
and the marginal constraints are equivalent to the nonlinear system
\begin{equation}\label{eqn:exp-discrete-marginal-system}
\sum_{j=1}^Mb_j\bigl[f_i+g_j-c_{ij}\bigr]_+=\varepsilon\ \forall i,\sum_{i=1}^Na_i\bigl[f_i+g_j-c_{ij}\bigr]_+=\varepsilon\ \forall j.
\end{equation}
Equations \eqref{eqn:exp-discrete-marginal-system} admit efficient solvers (nonlinear Gauss-Seidel / coordinate descent and semismooth Newton methods) \citep{lorenz2021quadratically} and directly identify the discrete support $\{(i,j):\pi_{ij}>0\}$ via \eqref{eqn:exp-discrete-kkt}.
\paragraph{Dual objective (discrete $\chi^2$ regularization).}
For $\mu_N=\sum_{i=1}^Na_i\delta_{x_i}$ and $\nu_M=\sum_{j=1}^Mb_j\delta_{y_j}$, the discrete dual associated with \eqref{eqn:QOT} is the unconstrained concave maximization
\begin{equation}\label{eqn:exp-discrete-dual}
\max_{f\in\R^N,g\in\R^M}\Big\{\sum_{i=1}^Na_if_i+\sum_{j=1}^Mb_jg_j-\frac{1}{2\varepsilon}\sum_{i=1}^N\sum_{j=1}^Ma_ib_j[f_i+g_j-c_{ij}]_+^2\Big\}.
\end{equation}
The (everywhere-defined) gradient is
\begin{equation}\label{eqn:exp-discrete-dual-grad}
\begin{aligned}
\nabla_{f_i}(\text{obj})&=a_i-\frac{a_i}{\varepsilon}\sum_{j=1}^Mb_j[f_i+g_j-c_{ij}]_+,i=1,\dots,N,\\
\nabla_{g_j}(\text{obj})&=b_j-\frac{b_j}{\varepsilon}\sum_{i=1}^Na_i[f_i+g_j-c_{ij}]_+,j=1,\dots,M.
\end{aligned}
\end{equation}
Thus the first-order optimality conditions $\nabla(\text{obj})=0$ are exactly the marginal system \eqref{eqn:exp-discrete-marginal-system}.
In large-scale settings, \eqref{eqn:exp-discrete-dual} can be optimized by (stochastic) gradient ascent using mini-batches to estimate the sums in \eqref{eqn:exp-discrete-dual-grad}.
\end{document}